\newtheorem{theorem}{Theorem}[section]
\newtheorem{lemma}[theorem]{Lemma}
\newtheorem{corollary}[theorem]{Corollary}
\newtheorem{question}[theorem]{Question}
\theoremstyle{definition}}
\theoremstyle{definition}}
\theoremstyle{definition}}
\theoremstyle{definition}\newtheorem{remark}[theorem]{Remark}}
\newtheorem*{cons}{Question S}
\newtheorem*{thmb}{Theorem B}
\newtheorem*{thmbg}{Theorem BG}
\numberwithin{equation}{section}
\def\C{{\mathbb C}}
\def\N{{\mathbb N}}
\def\Z{{\mathbb Z}}
\def\R{{\mathbb R}}
\def\T{{\mathbb T}}
\def\H{{\mathcal H}}
\def\K{{\mathcal K}}
\def\epsilon{\varepsilon}
\def\kappa{\varkappa}
\def\phi{\varphi}
\def\leq{\leqslant}
\def\geq{\geqslant}
\def\dim{{\rm dim}\,}
\def\ssub#1#2{#1_{{}_{{\scriptstyle #2}}}}
\def\ker{\hbox{\tt ker}\,}
\def\spann{\hbox{\tt span}\,}
\def\dist{\hbox{\tt dist}\,}
\def\spannn{\overline{\hbox{\tt span}}\,}
\def\PLA{\vrule width0pt height14pt depth10pt}
\def\PLAN{\vrule width0pt height16pt depth12pt}
\def\ssub#1#2{#1_{{}_{{\scriptstyle #2}}}}
\title{A hypercyclic finite rank perturbation of a unitary operator}
\author{Stanislav Shkarin}
\date{}
\begin{document}

\maketitle

\begin{abstract} A unitary operator $V$ and a rank $2$
operator $R$ acting on a Hilbert space $\H$ are constructed such
that $V+R$ is hypercyclic. This answers affirmatively a question of
Salas whether a finite rank perturbation of a hyponormal operator
can be supercyclic.
\end{abstract}

\small \noindent{\bf MSC:} \ \ 47A16, 37A25

\noindent{\bf Keywords:} \ \ Hypercyclic operators, Hyponormal
operators, Unitary operators, Finite rank operators \normalsize

\section{Introduction \label{s1}}\rm

{\it All vector spaces in this article are assumed to be over the
field} \ $\C$ of complex numbers. Symbol $\R$ stands for the field
of real numbers, $\Z_+$ is the set of non-negative integers, $\N$ is
the set of positive integers and $\T=\{z\in\C:|z|=1\}$. For a subset
$A$ of a Banach space $X$, $\spann(A)$ stands for the linear span of
$A$ and $\spannn(A)$ denotes the closure of $\spann(A)$. For a
Banach space $X$, $L(X)$ is the Banach algebra of continuous linear
operators on $X$. In what follows symbol $\mu$ stands for the
normalized Lebesgue measure on $\T$. Instead of $L_1(\T,\mu)$ and
$L_2(\T,\mu)$ we simply write $L_1(\T)$ and $L_2(\T)$. We use the
$\langle x,y\rangle$ notation for the scalar product of vectors $x$
and $y$ in a Hilbert space $\H$. A compact subset of a metric space
is called {\it perfect} if it is non-empty and has no isolated
points.

Recall that a continuous linear operator $T$ on a topological vector
space $X$ is called {\it hypercyclic} if there exists $x\in X$ such
that the orbit $\{T^nx:n\in\Z_+\}$ is dense in $X$ and $T$ is called
{\it supercyclic} if there is $x\in X$ for which the projective
orbit $\{\lambda T^nx:\lambda\in\C,\ n\in\Z_+\}$ is dense in $X$. We
refer to \cite{bama} for additional information on hypercyclicity
and supercyclicity. In particular, it is well  known that there are
no hypercyclic operators on finite dimensional Hausdorff topological
vector spaces and there are no supercyclic operators on Hausdorff
topological vector spaces of finite dimension $\geq2$. Thus when
speaking of hypercyclicity or supercyclicity of operators, we {\it
always assume that the underlying space is infinite dimensional}.

Recall also that a bounded linear operator $T$ on a Hilbert space
$\H$ is called {\it hyponormal} if $\|Tx\|\geq \|T^*x\|$ for any
$x\in\H$, where $T^*$ is the Hilbert space adjoint of $T$.
Equivalently, $T$ is hyponormal if and only if $T^*T-TT^*\geq 0$.

Hilden and Wallen \cite{hw} observed that there are no supercyclic
normal operators. Kitai \cite{ansa} proved that there are no
hypercyclic hyponormal operators. A result simultaneously stronger
than those of Hilden and Wallen and of Kitai was obtained by Bourdon
\cite{bdon}, who demonstrated that a hyponormal operator can not be
supercyclic. This motivated Salas \cite{sal1} to raise the following
question.

\begin{cons} Can a finite rank perturbation of a hyponormal operator
be supercyclic?
\end{cons}

The above question is also reproduced in \cite{moga}. It is worth
noting that Bayart and Matheron \cite{bm} constructed a unitary
operator on a Hilbert space $\H$, which is supercyclic on $\H$
endowed with the weak topology (=weakly supercyclic). In the present
paper we answer Question~S affirmatively.

\begin{theorem}\label{main} There exist a unitary operator $V$ and a
bounded linear operator $R$ of rank at most $2$ acting on a Hilbert
space $\H$ such that $T=V+R$ is hypercyclic.
\end{theorem}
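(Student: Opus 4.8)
The plan is to realize everything on $\H=L_2(\T)$ with $V=M_z$, the operator of multiplication by the independent variable (the bilateral shift on the Fourier basis $\{z^n:n\in\Z\}$), which is manifestly unitary. For the perturbation I would take $Rx=\langle x,a_1\rangle b_1+\langle x,a_2\rangle b_2$ with $a_1,a_2,b_1,b_2\in L_2(\T)$ to be chosen, so that $R$ has rank at most $2$ by construction and $T=V+R$. The whole construction will be driven by the eigenvectors of $T$ off the unit circle, together with the Hypercyclicity Criterion (see \cite{bama}): if the eigenvectors of $T$ for eigenvalues in $\{|\lambda|<1\}$ span a dense subspace $X_0$, and those for eigenvalues in $\{|\lambda|>1\}$ span a dense subspace $Y_0$, then $T$ is hypercyclic. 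Indeed, on $X_0$ one has $T^n u\to0$, while on $Y_0$ the map $S$ defined by $Sx_\mu=\mu^{-1}x_\mu$ satisfies $S^n v\to0$ and $T^n S^n v=v$; these are exactly the hypotheses of the criterion. Thus hypercyclicity is reduced to a completeness statement about two families of eigenvectors.

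Second, I would carry out the eigen-analysis explicitly. For $|\lambda|\ne1$ the resolvent $(V-\lambda)^{-1}$ is multiplication by $(z-\lambda)^{-1}$, and solving $(V+R)x=\lambda x$ forces $x=-\sum_j c_j(V-\lambda)^{-1}b_j$ with $c_j=\langle x,a_j\rangle$; hence every eigenvector has the Cauchy-kernel form
\[
x_\lambda(z)=\frac{c_1(\lambda)b_1(z)+c_2(\lambda)b_2(z)}{z-\lambda},
\]
and $\lambda$ is an eigenvalue precisely when the $2\times2$ matrix $I+M(\lambda)$ is singular, where $M(\lambda)_{ij}=\langle(V-\lambda)^{-1}b_j,a_i\rangle$. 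Expanding $(z-\lambda)^{-1}$ in powers of $\lambda$ (for $|\lambda|<1$) and of $\lambda^{-1}$ (for $|\lambda|>1$) shows that each $M_{ij}$ is analytic on each side of $\T$, governed by the Fourier coefficients of $g_{ij}=b_j\overline{a_i}$, the positive ones inside the disc and the negative ones outside. This is the crucial flexibility: choosing the data $a_i,b_i$ lets me prescribe the functions $M_{ij}$ on the two sides of $\T$ almost independently, and hence design the zero set of $\det(I+M(\lambda))$ separately inside and outside the disc.

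Third, I would use this freedom to arrange two infinite sequences of eigenvalues, $\Lambda_-\subset\{|\lambda|<1\}$ and $\Lambda_+\subset\{|\lambda|>1\}$, both accumulating on $\T$, and then confront the one genuinely hard point: completeness. The subtlety is that for a single fixed numerator (say $b\equiv1$) the kernels $(z-\lambda)^{-1}$, $\lambda\in\Lambda_-$, all lie in $\spannn\{z^{-1},z^{-2},\dots\}$, a proper subspace of $L_2(\T)$, so no such family can be dense. What rescues the construction, and what forces $R$ to have rank $2$ rather than $1$, is that the numerator $c_1(\lambda)b_1+c_2(\lambda)b_2$ varies with $\lambda$ inside the two-dimensional space $\spann\{b_1,b_2\}$; choosing $b_1,b_2$ with independent analytic and co-analytic parts lets the eigenvectors reach both halves of the Fourier spectrum. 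Establishing that $\{x_\lambda:\lambda\in\Lambda_-\}$ and $\{x_\mu:\mu\in\Lambda_+\}$ each span dense subspaces is a uniqueness-set and completeness problem for these rational functions, sensitive to how $\Lambda_\pm$ approach $\T$ (a Blaschke-type condition) and to the boundary behaviour of $b_1,b_2$; this is the main obstacle, and making the eigenvalue placement and the numerator design cooperate is where the real work lies.

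Finally, once the two spanning families are in place, verifying the remaining hypotheses of the Hypercyclicity Criterion is routine, and checking that $V$ is unitary and that $R$ has rank at most $2$ is immediate from the model. I would therefore expect the bulk of the argument to be devoted to the explicit choice of $a_1,a_2,b_1,b_2$ and to the completeness estimates of the third step.
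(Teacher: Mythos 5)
Your reduction of hypercyclicity to the Godefroy--Shapiro eigenvector criterion is correctly set up, and the eigen-analysis is sound: the Cauchy-kernel form of the eigenvectors, the $2\times2$ determinant condition $\det(I+M(\lambda))=0$, and the observation that a single constant numerator traps all eigenvectors inside $\spannn\{z^{-1},z^{-2},\dots\}$ are all right. But the step you defer --- proving that the eigenvectors with $|\lambda|<1$ and those with $|\mu|>1$ each span a dense subspace of \emph{all} of $L_2(\T)$ --- is not a technical loose end to be filled in later; it is the entire difficulty, and there is concrete reason to doubt it can be carried out. The eigenvectors for $|\lambda|<1$ are confined to $\spannn\{\bar z^{\,n+1}b_j : n\geq 0,\ j=1,2\}$ with coefficients $c(\lambda)$ forced by $\ker(I+M(\lambda))$, and the admissible $\lambda$ form a discrete (typically Blaschke-type) set in the disc, being the zero set of a perturbation determinant; density is then a delicate completeness problem for Cauchy kernels with constrained two-dimensional numerators, not a matter of tuning parameters. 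More decisively: the criterion you invoke, applied along the full sequence $(n)$ as you describe, yields a \emph{mixing} operator, and whether a mixing finite rank perturbation of a hyponormal (in particular unitary) operator exists is posed as an open problem in this very paper (Question~\ref{q2}). Your plan, if completed, would therefore prove something strictly stronger than Theorem~\ref{main} and not currently known to be true; the gap is essential, not routine.

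The paper takes a different route precisely to avoid this completeness problem. It builds a rank-one perturbation $U+S$ of the bilateral shift whose eigenvalues form a perfect compact set $K$ lying \emph{on} the unit circle, with eigenvectors $h_\lambda=h/(\lambda-z)$ depending continuously on $\lambda\in K$; this is where the hard analysis lives, since $h$ must vanish on $K$ at a H\"older rate, arranged via Belov's theorem on lacunary trigonometric series and a Cantor-type subset of a level set. Hypercyclicity then follows from the Bayart--Grivaux unimodular point spectrum criterion rather than the Hypercyclicity Criterion, and --- the key trick --- density of the eigenvector span is obtained for free by \emph{restricting} the operator to $\K=\spannn\{h_\lambda:\lambda\in K\}$. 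The price is that $U|_{\K}$ need not be unitary on $\K$; Lemmas~\ref{te} and~\ref{ge} show it differs from a unitary of $\K$ by an operator of rank at most one, which together with $S|_{\K}$ gives the rank-two perturbation. Note that the rank $2$ arises there for a completely different reason than in your scheme, where it is needed to enlarge the space of numerators.
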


The idea of the proof is the following. We consider the unitary
multiplication operator $U$ on $L_2(\T)$, $Uf(z)=zf(z)$, and
construct $h,g\in L_2(\T)$ and a closed linear subspace $\K$ of
$L_2(\T)$ such that $\K$ is invariant for $U+S$, where $Sf=\langle
f,g\rangle h$, the restriction $T\in L(\K)$ of $U+S$ to $\K$ is
hypercyclic and $T$ admits the decomposition $T=V+R$, with $V\in
L(\K)$ being unitary and $R\in L(\K)$ having rank at most $2$. We
prove the hypercyclicity of $T$ by means of applying a criterion of
Bayart and Grivaux \cite{bg} in terms of unimodular point spectrum.
We construct $g$, $h$ and $\K$ with the required properties using a
result of Belov \cite{belov} on the distribution of values of
functions defined by lacunary trigonometric series. Note also that
the described scheme immediately produces a hypercyclic rank $1$
perturbation of a Hilbert space contraction. Indeed, if $P$ is the
orthoprojection of $L_2(\T)$ onto $\K$, then
$T=(PU)\bigr|_{\K}+(PS)\bigr|_\K$, $(PU)\bigr|_{\K}$ is a
contraction on $\K$ and $(PS)\bigr|_\K$ is a rank 1 operator on
$\K$. Thus we have the following corollary, which is of independent
interest.

\begin{corollary}\label{main00} There exist a contraction $A$ and a
bounded rank $1$ linear operator $S$ acting on a Hilbert space $\H$
such that $T=A+S$ is hypercyclic.
\end{corollary}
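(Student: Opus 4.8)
The plan is to avoid relying on the bare statement of Theorem~\ref{main} and instead recycle the concrete objects produced in its proof, since the statement alone does not exhibit a contraction-plus-rank-$1$ splitting. Thus I would take the unitary multiplication operator $U$ on $L_2(\T)$, the functions $g,h\in L_2(\T)$, the rank~$1$ operator $Sf=\langle f,g\rangle h$, and the closed $(U+S)$-invariant subspace $\K\subseteq L_2(\T)$ on which the restriction $T=(U+S)\big|_\K$ is hypercyclic. Let $P\in L(L_2(\T))$ denote the orthogonal projection onto $\K$. The whole point is that $P$ is a norm-one idempotent fixing $\K$, so compressing $U$ to $\K$ costs nothing for hypercyclicity while converting the unitary part into a contraction.

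The key step is the decomposition. Since $x\in\K$ gives $Px=x$, and since $\K$ is invariant for $U+S$ the vector $(U+S)x$ again lies in $\K$, so that $P(U+S)x=(U+S)x$. Hence, for every $x\in\K$,
\[
Tx=(U+S)x=P(U+S)x=(PU)x+(PS)x ,
\]
and therefore
\[
T=(U+S)\big|_\K=(PU)\big|_\K+(PS)\big|_\K .
\]
I would set $A=(PU)\big|_\K$ and $S_1=(PS)\big|_\K$, so that $T=A+S_1$.

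It then remains to verify two elementary properties. First, $A$ is a contraction: for $x\in\K$ one has $\|Ax\|=\|PUx\|\leq\|Ux\|=\|x\|$, using that $P$ is an orthoprojection and $U$ is unitary. Second, $S_1$ has rank at most $1$: for $x\in\K$ we get $S_1x=P(\langle x,g\rangle h)=\langle x,g\rangle\,Ph$, so the range of $S_1$ is contained in $\spann\{Ph\}$. Finally, $T=A+S_1$ is hypercyclic by construction; since a contraction has bounded orbits and hence cannot be hypercyclic, we must have $S_1\neq0$, so its rank is exactly $1$. This yields the asserted contraction-plus-rank-$1$ decomposition.

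As for difficulty, once Theorem~\ref{main} together with the subspace $\K$ is in hand, the corollary is essentially a one-line repackaging and presents no real obstacle. All the genuine work lives in the proof of Theorem~\ref{main}, namely in constructing $g$, $h$ and $\K$ so that $\K$ is $(U+S)$-invariant and the compression is hypercyclic, via the Bayart--Grivaux unimodular point-spectrum criterion \cite{bg} and Belov's theorem on lacunary series \cite{belov}. The only point needing a moment's care at this stage is precisely that the argument invokes the \emph{construction} underlying Theorem~\ref{main}, and not merely its abstract conclusion.
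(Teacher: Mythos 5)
Your proposal is correct and is essentially identical to the paper's own argument, which also writes $T=(PU)\bigr|_{\K}+(PS)\bigr|_{\K}$ with $P$ the orthoprojection onto $\K$ and observes that the first summand is a contraction and the second has rank at most $1$. Your extra remark that $S_1\neq 0$ (since a contraction cannot be hypercyclic), pinning the rank at exactly $1$, is a small but correct refinement the paper leaves implicit.
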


The following lemma summarizes the properties of $h$ and $g$ we need
in order to run the described procedure. This is the key ingredient
in the proof of Theorem~\ref{main}.

\begin{lemma}\label{main0} There exist $h,g \in L_2(\T)$ and a perfect
compact set $K\subset \T$ such that
\begin{align}
&\text{$\lambda\mapsto h_\lambda$ is a continuous map from $K$ to
$L_2(\T)$, where $\textstyle
h_\lambda(z)=\frac{h(z)}{\lambda-z};$}\label{m01}
\\
&\langle h,g_1\rangle=0,\ \ \text{$\langle h_\lambda,g\rangle=1$ and
$\langle h_\lambda,g_1\rangle=\lambda^{-1}$ for each $\lambda\in K$,
where $g_1(z)=zg(z).$}\label{m02}
\end{align}
\end{lemma}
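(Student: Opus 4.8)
The plan is to first observe that the three requirements in \eqref{m02} are not independent, and then to reduce the whole problem to a boundary-value/Cauchy-transform question on $\T$ that can be solved by prescribing the Fourier coefficients of an auxiliary function with the help of Belov's theorem.

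First I would record the algebraic identity linking the three inner products. Since $g_1(z)=zg(z)$ and $\bar z=z^{-1}$ on $\T$, the partial-fraction decomposition $\frac{1}{z(\lambda-z)}=\frac1\lambda\bigl(\frac1z+\frac1{\lambda-z}\bigr)$ gives, for every $\lambda\in K$,
\[
\langle h_\lambda,g_1\rangle=\frac1\lambda\bigl(\langle h,g_1\rangle+\langle h_\lambda,g\rangle\bigr).
\]
Hence, once the first two relations in \eqref{m02} hold, namely $\langle h,g_1\rangle=0$ and $\langle h_\lambda,g\rangle=1$, the third relation $\langle h_\lambda,g_1\rangle=\lambda^{-1}$ follows automatically. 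Thus it suffices to construct $h$, $g$ and $K$ so that $\lambda\mapsto h_\lambda$ is continuous on $K$, $\langle h,g_1\rangle=0$, and $\langle h_\lambda,g\rangle=1$ for all $\lambda\in K$.

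Next I would translate the normalization $\langle h_\lambda,g\rangle=1$ into a statement about a single analytic function. Writing $\psi=h\overline g\in L_1(\T)$ with Fourier coefficients $c_n=\langle\psi,z^n\rangle$, the quantity $\langle h_\lambda,g\rangle=\int_\T\frac{\psi(z)}{\lambda-z}\,d\mu(z)$ is the Cauchy transform of $\psi$, whose interior and exterior analytic pieces have boundary values differing by $\bar\lambda\,\psi(\lambda)$. Letting $h$ vanish on $K$ serves two purposes: it forces the integrand to be genuinely square-integrable (so that $h_\lambda\in L_2(\T)$), and it kills the jump $\bar\lambda\,\psi(\lambda)$ on $K$, so that on $K$ the value $\langle h_\lambda,g\rangle$ equals the boundary value of the single analytic function $\Phi(\lambda)=-\sum_{n\geq1}c_n\lambda^{n-1}$ coming from the nonnegative-frequency part of $\psi$. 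Since $\langle h,g_1\rangle=c_1$, the condition $\langle h,g_1\rangle=0$ is exactly $\Phi(0)=0$. The task thus becomes: produce $h$, $g$ and a perfect set $K$ with $h$ vanishing on $K$ and $\Phi(0)=0$, $\Phi\equiv1$ on $K$. For \eqref{m01} the right regularity is that $h$ vanish on $K$ at a Hölder rate better than $\tfrac12$: if $|h(z)|\lesssim\dist(z,K)^{\alpha}$ with $\alpha>\tfrac12$, then $\int_\T|h(z)|^2|\lambda-z|^{-2}\,d\mu<\infty$ uniformly in $\lambda\in K$ and $\lambda\mapsto h_\lambda$ is continuous. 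Because an analytic function cannot equal $1$ on a boundary set of positive measure without being constant (which would contradict $\Phi(0)=0$), $K$ must have measure zero, so I would take $K$ to be a suitable perfect set of measure zero. Constructing $h$ is where Belov's theorem on the distribution of values of lacunary trigonometric series enters: it supplies a lacunary series whose modulus is controlled by $\dist(\cdot,K)$ and whose values on $K$ can be prescribed, which is the mechanism for simultaneously guaranteeing the Hölder vanishing of $h$ on $K$ and the normalization $\Phi\equiv1$ there; the function $g$ (equivalently the remaining, negative-frequency coefficients of $\psi$ together with the factorization $\psi=h\overline g$) is then chosen to realize the prescribed $c_n$ while keeping $g\in L_2(\T)$.

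The main obstacle I anticipate is exactly this balancing act. Making $h$ vanish fast enough on $K$ for \eqref{m01} pushes $h$ to be smooth and small near $K$, whereas forcing the Cauchy transform to take the exact constant value $1$ on the \emph{whole} perfect set $K$ (rather than on a finite set, where interpolation would be routine) is a rigid analytic constraint. Reconciling these demands over an uncountable perfect set is the heart of the matter, and it is for this reconciliation that the quantitative control furnished by Belov's theorem---rather than a soft existence result such as Rudin--Carleson interpolation---appears to be essential.
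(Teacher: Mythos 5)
Your reduction is sound and the architecture matches the paper's: the partial-fraction identity $\frac{1}{z(\lambda-z)}=\frac1\lambda\bigl(\frac1z+\frac1{\lambda-z}\bigr)$ correctly shows the third condition in (\ref{m02}) follows from the first two (a genuine simplification; the paper verifies all three by parallel computations), the reformulation via the Cauchy transform of $\psi=h\overline g$ is exactly the paper's starting point, and the criterion that $h$ vanish on $K$ at a H\"older rate exceeding $\tfrac12$ is the right one for (\ref{m01}). But the proposal stops where the real work begins, and where it gestures at the construction the logic is reversed. Belov's theorem does not let you \emph{prescribe} the values of a lacunary series on a perfect set $K$ chosen in advance; it only guarantees that the \emph{level sets} of a fixed lacunary series are uncountable. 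The paper therefore proceeds in the opposite order: it first fixes $\gamma(z)=\sum_n 8^{1-n}z^{2^{9n}}\in H_{1/3}(\T)$, invokes Theorem~B to conclude that $F=\gamma^{-1}(i)$ is uncountable, and only then extracts $K\subseteq F$. The device you are missing is the symmetrization $\psi(z)=\gamma(z)+\gamma(z^{-1})$: since $\gamma$ has real coefficients, $\gamma(z^{-1})=\overline{\gamma(z)}$ on $\T$, so the single condition $\gamma=i$ on $K$ simultaneously forces $\psi=0$ on $K$ (giving, after multiplication by $\dist(\cdot,K)^{1/3}$, the vanishing of $h$ at rate $\dist(\cdot,K)^{2/3}$ needed for (\ref{m01})) and pins the negative-frequency part $\psi_-(z)=\gamma(z^{-1})$ at the constant $-i$ on $K$ (giving $\langle h_\lambda,g\rangle=1$ once the normalizing factor is absorbed into $g$). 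Without some mechanism of this kind, the ``balancing act'' you identify as the heart of the matter is not resolved but merely restated.

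A second, independent gap: you need $g\in L_2(\T)$, and in any construction of this type $g$ carries the factor $\dist(\cdot,K)^{-1/3}$ that compensates the vanishing of $h$, so you need $\int_\T\dist(z,K)^{-2/3}\,\mu(dz)<\infty$. This is a genuine constraint on the metric structure of $K$ (it forces, in particular, control of the lengths of the complementary arcs) and does not hold for an arbitrary perfect subset of the level set. The paper needs a separate Cantor-type extraction lemma (Lemma~\ref{comp}) producing, inside any uncountable closed set, a perfect compact subset whose complementary gaps shrink super-exponentially, so that $\dist(\cdot,K)^{-\alpha}$ is integrable for every $\alpha<1$. Your proposal says $g$ is ``chosen to realize the prescribed $c_n$ while keeping $g\in L_2(\T)$'' but gives no indication of how the perfect set is to be chosen compatibly with this requirement; as written, that step does not go through.
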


In Section~\ref{s2}, Theorem~\ref{main} is reduced to
Lemma~\ref{main0}. The latter is proved in Section~\ref{s3}. We
discuss further possibilities in Section~\ref{s5}.

\section{Reduction of Theorem~\ref{main} to Lemma~\ref{main0} \label{s2}}

In this section we assume Lemma~\ref{main0} to be true and prove
Theorem~\ref{main}. We start by deriving the following lemma from
Lemma~\ref{main0}.

\begin{lemma}\label{main1} There exist a Hilbert space $\H$, a
unitary operator $U\in L(\H)$, $h\in\H$, $S\in L(\H)$ with
$S(\H)=\spann\{h\}$, a perfect compact set $K\subseteq \T$ and a
continuous map $\lambda\mapsto h_\lambda$ from $K$ to
$\H\setminus\{0\}$ such that
\begin{align}
&(U+S)h_\lambda=\lambda h_\lambda\quad\text{for each $\lambda\in
K;$}\label{e1}
\\
&\text{either \ $h,U^{-1}h\in\K$ \ or \ $h,U^{-1}h\notin\K$, \ where
\ $\K=\spannn\{h_\lambda:\lambda\in K\}.$}\label{e2}
\end{align}
\end{lemma}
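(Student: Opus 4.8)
The plan is to realize the abstract data of Lemma~\ref{main1} concretely inside $L_2(\T)$. I would take $\H=L_2(\T)$, let $U\in L(\H)$ be the unitary multiplication operator $Uf(z)=zf(z)$, borrow $h$, $g$, the set $K$ and the family $h_\lambda$ from Lemma~\ref{main0}, and put $Sf=\langle f,g\rangle h$, so that $S(\H)=\spann\{h\}$ as required. Then \eqref{e1} is a one-line verification: by \eqref{m02} we have $Sh_\lambda=\langle h_\lambda,g\rangle h=h$, while $Uh_\lambda(z)=\frac{zh(z)}{\lambda-z}$, and the identity $\frac{z}{\lambda-z}+1=\frac{\lambda}{\lambda-z}$ gives $(U+S)h_\lambda=\lambda h_\lambda$. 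The map $\lambda\mapsto h_\lambda$ is continuous by \eqref{m01}, and $h_\lambda\neq0$ because $\langle h_\lambda,g\rangle=1\neq0$, so it maps $K$ into $\H\setminus\{0\}$.

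For \eqref{e2} I would first record two rank-one identities. Rearranging \eqref{e1} gives $Uh_\lambda=\lambda h_\lambda-h$. For the inverse, since $\langle U^{-1}h,g\rangle=\langle h,Ug\rangle=\langle h,g_1\rangle=0$ by \eqref{m02}, the Sherman--Morrison denominator $1+\langle U^{-1}h,g\rangle$ equals $1$; hence $U+S$ is invertible on $\H$ and
\[
(U+S)^{-1}f=U^{-1}f-\langle f,g_1\rangle\,U^{-1}h .
\]
Evaluating at $h_\lambda$ (and using $(U+S)^{-1}h_\lambda=\lambda^{-1}h_\lambda$) yields $U^{-1}h_\lambda=\lambda^{-1}(h_\lambda+U^{-1}h)$. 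Since $\K=\spannn\{h_\lambda:\lambda\in K\}$ is spanned by eigenvectors of both $U+S$ and $(U+S)^{-1}$, it is invariant under both, so $(U+S)\K=\K$ and $(U+S)|_\K$ is invertible. Combining the two identities with $\langle h_\lambda,g\rangle=1$ and $\langle h_\lambda,g_1\rangle=\lambda^{-1}\neq0$, and extending by density and continuity, I would deduce the equivalences $h\in\K\iff U\K\subseteq\K$ and $U^{-1}h\in\K\iff U^{-1}\K\subseteq\K$.

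The crux of the lemma is to show that these two inclusions hold simultaneously, and this is where I expect the only genuine difficulty, since one-sided invariance of a subspace under a unitary operator does not in general imply two-sided invariance. My plan is a Fredholm-index argument. If $h\in\K$, then $U\K\subseteq\K$ and $U|_\K$ is an isometry of $\K$; moreover $S|_\K$ has range in $\spann\{h\}\subseteq\K$ and is compact (rank $\le1$), so $U|_\K=(U+S)|_\K-S|_\K$ is a compact perturbation of the invertible operator $(U+S)|_\K$ and is therefore Fredholm of index $0$. An isometry of index $0$ has trivial cokernel, whence $U\K=\K$ and $U|_\K$ is unitary, so $U^{-1}h\in\K$. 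The converse is symmetric: if $U^{-1}h\in\K$, then $U^{-1}|_\K$ is an isometry, the rank-one operator $f\mapsto\langle f,g_1\rangle U^{-1}h$ has range in $\spann\{U^{-1}h\}\subseteq\K$, and the same index computation applied to $(U+S)^{-1}|_\K$ forces $U^{-1}\K=\K$ and hence $h\in\K$. This yields the dichotomy \eqref{e2} and completes the reduction of Lemma~\ref{main1} to Lemma~\ref{main0}.
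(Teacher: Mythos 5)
Your construction and the verification of \eqref{e1} coincide with the paper's: same $\H=L_2(\T)$, same $U$, $S$, $h_\lambda$, same use of \eqref{m01} and \eqref{m02}. Where you genuinely diverge is the proof of \eqref{e2}. The paper establishes $h\in\K\Leftrightarrow U^{-1}h\in\K$ by hand: it approximates $h$ (resp.\ $U^{-1}h$) by finite combinations $\sum_j c_{j}h_{\lambda_{j}}$, pairs with $Ug$ (resp.\ $g$) to force $\sum_j c_{j}/\lambda_{j}\to0$ (resp.\ $\sum_j c_{j}\to0$), and then applies $U^{-1}$ (resp.\ $U$) together with the identities $Uh_\lambda=\lambda h_\lambda-h$ and $U^{-1}h_\lambda=\lambda^{-1}h_\lambda+\lambda^{-1}U^{-1}h$ to land back in $\K$. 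You instead upgrade $h\in\K$ to the subspace inclusion $U(\K)\subseteq\K$, note that $U\bigr|_\K=(U+S)\bigr|_\K-S\bigr|_\K$ is an isometric rank-one perturbation of the invertible operator $(U+S)\bigr|_\K$ (invertibility on $\H$ via Sherman--Morrison with denominator $1+\langle U^{-1}h,g\rangle=1$, and on $\K$ because $\K$ is spanned by eigenvectors of $U+S$ and of its inverse), and invoke stability of the Fredholm index: an injective index-zero isometry is onto, so $U(\K)=\K$ and $U^{-1}h\in\K$; the converse is symmetric. Both arguments are correct and rest on the same two rank-one identities and the same cancellation $\langle h,g_1\rangle=0$ from \eqref{m02}. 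The paper's version is elementary and self-contained, using nothing beyond sequences and scalar products; yours is shorter modulo Fredholm theory and is conceptually more transparent, since it exhibits the dichotomy in \eqref{e2} as an instance of the general principle that a one-sidedly invariant subspace of a unitary is reducing whenever the restriction is a compact perturbation of an invertible operator --- a principle closely related to the one the paper later exploits in Lemmas~\ref{te} and~\ref{ge}.
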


\begin{proof}Let $\H=L_2(\T)$ and $U\in L(\H)$, $Uf(z)=zf(z)$. Obviously, $U$ is
unitary. Let also $K\subset\T$ and $h,g\in \H$ be the perfect
compact set and the functions provided by Lemma~\ref{main0}. For
$\lambda\in K$, let $h_\lambda(z)=\frac{h(z)}{\lambda-z}$. According
to (\ref{m01}), $h_\lambda\in \H$ for each $\lambda\in K$ and the
map $\lambda\mapsto h_\lambda$ from $K$ to $\H$ is continuous. By
(\ref{m02}), $h_\lambda\neq 0$ for every $\lambda\in K$. Define
$S\in L(\H)$ by the formula $Sf=\langle f,g\rangle h$. By
(\ref{m02}), $g\neq 0$ and therefore $S(\H)=\spann\{h\}$. It remains
to verify (\ref{e1}) and (\ref{e2}).

Taking into account the specific shape of $h_\lambda$ and $U$, one
can easily see that
\begin{equation}\label{uhl}
Uh_\lambda=\lambda h_\lambda - h\ \ \ \text{and}\ \ \
U^{-1}h_\lambda=\lambda^{-1}h_\lambda+\lambda^{-1}h\ \ \ \text{for
each $\lambda\in K$.}
\end{equation}
By (\ref{m02}), $\langle h_\lambda,g\rangle=1$ and therefore
$Sh_\lambda=h$ for every $\lambda\in K$. Thus the first equality in
(\ref{uhl}) implies that $(U+S)h_\lambda=\lambda h_\lambda$ for each
$\lambda\in K$. That is, (\ref{e1}) is satisfied. In order to prove
(\ref{e2}) it suffices to verify that $h\in\K$ if and only if
$U^{-1}h\in \K$.

First, assume that $h\in\K$. Then there exists a sequence
$\Bigl\{\sum\limits_{j=1}^{k_n}
c_{j,n}h_{\lambda_{j,n}}\Bigr\}_{n\in\N}$ with $\lambda_{j,n}\in K$
and $c_{j,n}\in\C$ such that
\begin{equation}\label{lico1}
\smash{\sum\limits_{j=1}^{k_n} c_{j,n}h_{\lambda_{j,n}}\to h\ \
\text{in $\H$ as $n\to\infty$.}}\PLA
\end{equation}
By (\ref{m02}), $\langle h_\lambda,Ug\rangle=\lambda^{-1}$ for any
$\lambda\in K$ and $\langle h,Ug\rangle=0$. Using these equalities
and taking the scalar product with $Ug$ in (\ref{lico1}), we obtain
\begin{equation}\label{lico11}
\smash{\sum\limits_{j=1}^{k_n} \frac{c_{j,n}}{\lambda_{j,n}}\to 0\ \
\text{as $n\to\infty$.}}\PLA
\end{equation}
Applying $U^{-1}$ to (\ref{lico1}), we get
$\smash{\sum\limits_{j=1}^{k_n} c_{j,n}U^{-1}h_{\lambda_{j,n}}\to
U^{-1}h}$ in $\H$ as $n\to\infty$. Using the second equality in
(\ref{uhl}), we obtain
\begin{equation*}
\smash{\biggl(\sum\limits_{j=1}^{k_n}\frac{c_{j,n}}{\lambda_{j,n}}\biggr)U^{-1}h+
\sum\limits_{j=1}^{k_n}
\frac{c_{j,n}}{\lambda_{j,n}}h_{\lambda_{j,n}}\to U^{-1}h\ \
\text{in $\H$ as $n\to\infty$.}}\PLA
\end{equation*}
By (\ref{lico11}), $\smash{\sum\limits_{j=1}^{k_n}
\frac{c_{j,n}}{\lambda_{j,n}}h_{\lambda_{j,n}}\to U^{-1}h}$ as
$n\to\infty$. Hence $U^{-1}h\in\K$. Thus $h\in\K$ implies
$U^{-1}h\in\K$.

Now we assume that $U^{-1}h\in\K$. Then there exists a sequence
$\Bigl\{\sum\limits_{j=1}^{k_n}
c_{j,n}h_{\lambda_{j,n}}\Bigr\}_{n\in\N}$ with $\lambda_{j,n}\in K$
and $c_{j,n}\in\C$ such that
\begin{equation}\label{lico2}
\smash{\sum\limits_{j=1}^{k_n} c_{j,n}h_{\lambda_{j,n}}\to U^{-1}h\
\ \text{in $\H$ as $n\to\infty$.}}\PLA
\end{equation}
By (\ref{m02}), $\langle h_\lambda,g\rangle=1$ for any $\lambda\in
K$ and $\langle U^{-1}h,g\rangle=\langle h,Ug\rangle=0$. Using these
equalities and taking the scalar product with $g$ in (\ref{lico1}),
we obtain
\begin{equation}\label{lico22}
\smash{\sum\limits_{j=1}^{k_n} c_{j,n}\to 0\ \ \text{as
$n\to\infty$.}}\PLA
\end{equation}
Applying $U$ to (\ref{lico2}), we get $\sum\limits_{j=1}^{k_n}
c_{j,n}Uh_{\lambda_{j,n}}\to h$ in $\H$ as $n\to\infty$. Using the
first equality in (\ref{uhl}), we see that
\begin{equation*}
\smash{-\biggl(\sum\limits_{j=1}^{k_n}c_{j,n}\biggr)h+
\sum\limits_{j=1}^{k_n} \lambda_{j,n}c_{j,n}h_{\lambda_{j,n}}\to h\
\ \text{in $\H$ as $n\to\infty$.}}\PLA
\end{equation*}
By (\ref{lico22}), $\sum\limits_{j=1}^{k_n}
\lambda_{j,n}c_{j,n}h_{\lambda_{j,n}}\to h$ as $n\to\infty$. Hence
$h\in\K$. Thus $U^{-1}h\in\K$ implies that $h\in\K$.
\end{proof}

We also need the following criterion of hypercyclicity by Bayart and
Grivaux \cite{bg}.

\begin{thmbg}\it Let $X$ be a separable infinite dimensional Banach
space, $T\in L(X)$ and assume that there exists a continuous Borel
probability measure $\nu$ on the unit circle $\T$ such that for each
Borel set $A\subseteq \T$ with $\nu(A)=1$, the space
\begin{equation}\label{bgsp}
\smash{X_A=\spann\left(\bigcup_{z\in A}\ker(T-zI)\right)}\PLA
\end{equation}
is dense in $X$. Then $T$ is hypercyclic.
\end{thmbg}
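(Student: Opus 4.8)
The plan is to deduce hypercyclicity from the existence of a $T$-invariant Borel probability measure $\mu$ on $X$ that is Gaussian, has full support, and is ergodic. Granting such a $\mu$, the conclusion is immediate: full support gives $\mu(U)>0$ for every non-empty open $U\subseteq X$, and since $X$ is separable we may fix a countable base $\{U_k\}$ of its topology. Applying Birkhoff's ergodic theorem to each indicator $\mathbf 1_{U_k}$, for $\mu$-almost every $x$ the orbit $\{T^nx\}$ visits $U_k$ with asymptotic frequency $\mu(U_k)>0$; intersecting these conull sets over $k$ yields a conull (in particular non-empty) set of vectors whose orbit is dense. Thus the whole argument reduces to constructing $\mu$, and it will in fact deliver frequent hypercyclicity.

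The first and most delicate step is to replace the abstract spanning hypothesis by a concrete eigenvector field. Using a measurable selection theorem for the multifunction $z\mapsto\ker(T-zI)$, I would extract a sequence of bounded $\nu$-measurable maps $E_n\colon\T\to X$ satisfying $TE_n(z)=zE_n(z)$ for $\nu$-almost every $z$, chosen so that the family remains perfectly spanning: for every Borel $A$ with $\nu(A)=1$ the set $\{E_n(z):z\in A,\ n\in\N\}$ has dense linear span. This is a faithful reformulation of the hypothesis that each $X_A$ is dense, but obtaining it is the crux of the proof, because the eigenspaces $\ker(T-zI)$ vary only measurably in $z$ and may be infinite dimensional; one must select measurably and renormalise so that the induced synthesis operator is $\gamma$-radonifying, all the while verifying that neither operation destroys the spanning property on any $\nu$-conull set.

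Given such fields, consider the synthesis operator $\Phi\colon\bigoplus_n L_2(\T,\nu)\to X$, $\Phi(\phi_n)_n=\sum_n\int_\T\phi_n(z)E_n(z)\,d\nu(z)$, and let $\mu=\Phi_*\gamma$ be the image under $\Phi$ of the standard cylindrical complex Gaussian measure $\gamma$ on $\bigoplus_n L_2(\T,\nu)$; the radonifying normalisation arranged above makes $\mu$ a genuine Gaussian Radon probability measure on $X$. Invariance is the key computation: from $TE_n(z)=zE_n(z)$ one reads off the intertwining $T\Phi=\Phi\widetilde M$, where $\widetilde M=\bigoplus_n M$ and $M$ is multiplication by $z$ on $L_2(\T,\nu)$; since $|z|=1$ on $\T$ the operator $M$, and hence $\widetilde M$, is unitary, and the standard complex Gaussian is invariant under unitaries, so $T_*\mu=\Phi_*\widetilde M_*\gamma=\Phi_*\gamma=\mu$. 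For full support, recall that the support of $\mu$ is the closure of the range of its covariance operator $\Phi\Phi^*$, namely $\overline{\Phi(\bigoplus_n L_2(\T,\nu))}$; this is all of $X$ unless some non-zero $x^*\in X^*$ has $\langle E_n(z),x^*\rangle=0$ for every $n$ and $\nu$-almost every $z$, which by perfect spanning forces $x^*=0$. Hence $\mu$ has full support.

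Finally, ergodicity follows from the continuity of $\nu$. The relevant unitary on the first Wiener chaos of the Gaussian system is unitarily equivalent to multiplication by $z$ on $\bigoplus_n L_2(\T,\nu)$, all of whose spectral measures are absolutely continuous with respect to $\nu$; since $\nu$ is a continuous (non-atomic) measure the maximal spectral type is therefore non-atomic, and by the spectral characterisation of Gaussian dynamical systems a Gaussian system is weakly mixing precisely when the maximal spectral type of its chaos-one unitary is continuous. Thus $(X,\mu,T)$ is weakly mixing and a fortiori ergodic, and combining this with full support as in the first paragraph finishes the proof. The two genuinely substantial points are the measurable, bounded, perfectly spanning selection of eigenvector fields and the appeal to the Gaussian weak-mixing criterion; by contrast the invariance and full-support verifications are routine once the fields are in place.
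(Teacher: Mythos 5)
The paper itself offers no proof of Theorem~BG: it is quoted verbatim from Bayart and Grivaux \cite{bg} and used as a black box, so there is no internal argument to compare yours against. Your outline reproduces the original Bayart--Grivaux ergodic-theoretic strategy: measurable unimodular eigenvector fields, a Gaussian measure pushed forward through the synthesis operator, $T$-invariance via the intertwining with a unitary multiplication operator, full support from perfect spanning, weak mixing from the continuity of $\nu$ via the spectral theory of Gaussian dynamical systems, and Birkhoff's theorem. Each of these verifications is sound as far as it goes, and you correctly identify the measurable perfectly spanning selection and the Gaussian weak-mixing criterion as the substantial inputs; both are established in the literature.

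There is, however, a genuine gap for the statement as given, namely for an \emph{arbitrary} separable Banach space $X$. The step you describe as ``renormalise so that the induced synthesis operator is $\gamma$-radonifying'' is not merely delicate: it is in general impossible. Whether an operator from a Hilbert space into $X$ is $\gamma$-radonifying is governed by the geometry (type and cotype) of $X$; the renormalisation can always be carried out when $X$ is a Hilbert space (where $\gamma$-radonifying means Hilbert--Schmidt) or, more generally, has type~$2$, but Bayart and Grivaux themselves later exhibited operators with a perfectly spanning set of unimodular eigenvectors --- Kalisch-type operators on $L_p$-spaces with $1\leq p<2$ --- that admit \emph{no} invariant Gaussian measure with full support whatsoever, so no choice of selection or weights can rescue the construction. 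Consequently your argument proves the Hilbert-space (or type-$2$) version of the theorem, with the bonus of frequent hypercyclicity, but not the Banach-space statement; the latter is obtained in \cite{bg} by a softer, measure-free argument in which the eigenvector fields and Wiener's lemma (the Ces\`aro decay of the Fourier coefficients of a continuous measure on $\T$) are used to verify a Hypercyclicity-Criterion-type condition. For the purposes of the present paper the gap is harmless: the criterion is applied to an operator on the Hilbert space $\K$ with an explicitly \emph{continuous} eigenvector field $\lambda\mapsto h_\lambda$ on a perfect compact set carrying $\nu$, so both of your delicate steps become routine there. But as a proof of Theorem~BG as stated, the proposal is incomplete.
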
\rm

\begin{corollary}\label{bg} Let $X$ be a separable infinite dimensional Banach
space, $T\in L(X)$ and assume that there exists a perfect compact
set $K\subseteq\T$ and a continuous map $\lambda\mapsto x_\lambda$
from $K$ to $X$ such that $Tx_\lambda=\lambda x_\lambda$ for each
$\lambda\in K$ and $\spann\{x_\lambda:\lambda\in K\}$ is dense in
$X$. Then $T$ is hypercyclic.
\end{corollary}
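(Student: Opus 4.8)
The goal is to derive Corollary \ref{bg} from Theorem BG. The plan is to construct a suitable continuous Borel probability measure $\nu$ on $\T$ from the given data and verify the density hypothesis of Theorem BG. The natural candidate for $\nu$ is a measure supported on the perfect compact set $K$; since $K$ is perfect, it is uncountable and carries a continuous (atomless) Borel probability measure. I would fix such a measure $\nu$ with $\nu(K)=1$ and $\nu(\{z\})=0$ for every singleton.

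The main point is to check that for every Borel set $A\subseteq\T$ with $\nu(A)=1$, the space $X_A$ defined in \eqref{bgsp} is dense in $X$. Since $Tx_\lambda=\lambda x_\lambda$ and $x_\lambda$ is the image of $\lambda$ under the given continuous map, each $x_\lambda$ lies in $\ker(T-\lambda I)$, so for $\lambda\in A$ we have $x_\lambda\in X_A$. Thus $\spann\{x_\lambda:\lambda\in A\}\subseteq X_A$, and it suffices to show $\spannn\{x_\lambda:\lambda\in A\}=X$. Here is where $\nu(A)=1$ combined with continuity of $\nu$ does the work: the complement $K\setminus A$ satisfies $\nu(K\setminus A)=0$, so $A\cap K$ is a $\nu$-full, hence dense, subset of $K$ (a measure-zero set cannot contain a nonempty relatively open subset of $K$, since $K$ is perfect and $\nu$ is continuous with full support considerations handled by choosing $\nu$ appropriately). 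By continuity of $\lambda\mapsto x_\lambda$, every $x_\mu$ with $\mu\in K$ is a limit of $x_\lambda$ with $\lambda\in A\cap K$, so $\spannn\{x_\lambda:\lambda\in A\}\supseteq\{x_\mu:\mu\in K\}$, whence it contains $\spann\{x_\mu:\mu\in K\}$, which is dense in $X$ by hypothesis. Therefore $X_A$ is dense in $X$.

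The one technical subtlety I would need to address carefully is ensuring that $A\cap K$ is actually dense in $K$. This requires that $\nu$ assign positive measure to every nonempty relatively open subset of $K$, i.e. that $\nu$ have full support in $K$; otherwise a $\nu$-null set $K\setminus A$ could contain an open portion of $K$ and destroy density. So the correct construction is to choose a continuous Borel probability measure $\nu$ on $\T$ with $\supp\nu=K$. Such a measure exists on any perfect compact set (for instance, push forward a continuous measure under a continuous surjection from a Cantor set, or use a standard construction giving full support and no atoms). With $\supp\nu=K$, any Borel set $A$ with $\nu(A)=1$ meets every nonempty relatively open subset of $K$, so $A\cap K$ is dense in $K$, and the argument above goes through.

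Having verified the hypothesis of Theorem BG for this $\nu$, I conclude that $T$ is hypercyclic. The separability of $X$ and infinite dimensionality are inherited from the hypotheses of the corollary, so Theorem BG applies directly. I expect the only real obstacle to be the full-support requirement on $\nu$; once the measure is chosen with $\supp\nu=K$, the rest is a routine continuity-and-density argument.
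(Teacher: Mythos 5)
Your proposal is correct and follows essentially the same route as the paper: choose a continuous Borel probability measure $\nu$ with $\supp\nu=K$, note that any $\nu$-full Borel set meets $K$ in a dense subset, and use continuity of $\lambda\mapsto x_\lambda$ to conclude that $X_A$ contains a dense subspace. The full-support requirement you flag as the key technical point is exactly the condition the paper imposes on $\nu$.
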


\begin{proof} Since $K$ is a perfect compact subset of $\T$, we can
pick a continuous Borel probability measure $\nu$ on the unit circle
$\T$ such that $K$ is exactly the support of $\nu$. Let now
$A\subseteq \T$ be a Borel measurable set such that $\nu(A)=1$.
Since $K$ is the support of $\nu$, $B=A\cap K$ is dense in $K$.
Clearly $x_\lambda\in X_A$ for each $\lambda\in B$, where $X_A$ is
defined in (\ref{bgsp}). Thus $\spann\{x_\lambda:\lambda\in
B\}\subseteq X_A$. Since the map $\lambda\mapsto x_\lambda$ is
continuous, $B$ is dense in $K$ and $\spann\{x_\lambda:\lambda\in
K\}$ is dense in $X$, we see that $\spann\{x_\lambda:\lambda\in B\}$
is dense in $X$. Hence $X_A$ is dense in $X$. By Theorem~BG, $T$ is
hypercyclic.
\end{proof}

\begin{lemma}\label{te} Let $U$ be a unitary operator acting on a
Hilbert space $\H$ and $\K$, $\K_+$ and $\K_-$ be closed linear
subspaces of $\H$ such that $\K\subseteq \K_+\cap \K_-$,
$\dim\K_+/\K=\dim\K_-/\K=1$, $U(\K)\subseteq\K_+$,
$U^{-1}(\K)\subseteq\K_-$, $U(\K)\not\subseteq \K$ and
$U^{-1}(\K)\not\subseteq \K$. Then there exist a unitary operator
$V\in L(\K)$ and a bounded linear operator $A:\K\to\H$ of rank at
most $1$ such that $U\bigr|_{\K}=V+A$.
\end{lemma}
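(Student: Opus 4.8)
The plan is to realize $U|_\K$ explicitly as a rank-one modification of a unitary operator on $\K$. Let $e_+$ and $e_-$ be unit vectors spanning the one-dimensional spaces $\K_+\ominus\K$ and $\K_-\ominus\K$, and let $P$ be the orthogonal projection of $\H$ onto $\K$. Since $U(\K)\subseteq\K_+=\K\oplus\C e_+$, every $x\in\K$ satisfies $Ux=Wx+\langle Ux,e_+\rangle e_+$ with $W:=PU|_\K\in L(\K)$; writing $\langle Ux,e_+\rangle=\langle x,U^{-1}e_+\rangle=\langle x,u\rangle$ with $u:=PU^{-1}e_+\in\K$, the operator $U|_\K$ acts as $x\mapsto Wx+\langle x,u\rangle e_+$. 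The hypothesis $U(\K)\not\subseteq\K$ forces $u\neq0$, and symmetrically $U^{-1}|_\K$ acts as $x\mapsto W^*x+\langle x,w\rangle e_-$ with $w:=PUe_-\neq0$ (that $PU^{-1}|_\K=W^*$ follows from $\langle Wx,y\rangle=\langle Ux,y\rangle=\langle x,U^{-1}y\rangle$ for $x,y\in\K$). Since $U$ is isometric on $\K$, expanding $\|Ux\|^2=\|x\|^2$ gives $W^*Wx=x-\langle x,u\rangle u$, and likewise $WW^*y=y-\langle y,w\rangle w$; thus $W$ is a contraction whose defects $I-W^*W$ and $I-WW^*$ are the rank-one operators determined by $u$ and $w$.

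The observation that keeps the perturbation of rank one is to look for $V$ in the special form $V=W-\delta\langle\,\cdot\,,u\rangle w$ for a scalar $\delta\in\C$ to be chosen. For any such $\delta$ one has $V\in L(\K)$ and
\[
A:=U|_\K-V=\langle\,\cdot\,,u\rangle e_++\delta\langle\,\cdot\,,u\rangle w=\langle\,\cdot\,,u\rangle(e_++\delta w),
\]
so $A$ automatically has rank at most $1$. Hence everything reduces to choosing $\delta$ so that $V$ is unitary, that is, so that $V^*V=I$ and $VV^*=I$.

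The crux is to show these two conditions are simultaneously solvable, and for this I would first extract the hidden symmetry between $u$ and $w$. Applying $U^{-1}$ to $Ux=Wx+\langle x,u\rangle e_+$ and using $U^{-1}(Wx)=W^*Wx+\langle Wx,w\rangle e_-$ together with $W^*Wx=x-\langle x,u\rangle u$ yields, after cancelling the common term $x$, the identity $\langle x,u\rangle(u-U^{-1}e_+)=\langle x,W^*w\rangle e_-$ for all $x\in\K$; since $u-U^{-1}e_+$ and $e_-$ both lie in $\K^\perp$, this shows $U^{-1}e_+=u+\beta e_-$ and $W^*w=-\overline{\beta}\,u$ for some $\beta\in\C$. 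The symmetric computation gives $Ue_-=w+\beta'e_+$ and $Wu=-\overline{\beta'}\,w$, and applying $U$ to $U^{-1}e_+=u+\beta e_-$ and matching the $\K$- and $e_+$-components forces $\beta'=\overline{\beta}$ and $\|u\|=\|w\|$. With $Wu$ a multiple of $w$ and $W^*w$ a multiple of $u$ in hand, a direct expansion shows that both $V^*V=I$ and $VV^*=I$ collapse to the \emph{same} scalar equation $\|w\|^2|\delta|^2+2\,\mathrm{Re}(\overline{\beta}\,\delta)=1$; completing the square exhibits this as a circle of positive radius in the $\delta$-plane, so a solution $\delta$ exists, and the resulting $V=W-\delta\langle\,\cdot\,,u\rangle w$ is unitary with $U|_\K=V+A$ and $\mathrm{rank}\,A\leq1$. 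The main obstacle is precisely the equality $\|u\|=\|w\|$: without it the two unitarity conditions would cut out distinct circles meeting only at $\delta=0$, which is incompatible with $V$ being invertible, so it is essential that both non-invariance hypotheses (on $U$ and on $U^{-1}$) and both codimension-one conditions enter to force this balance.
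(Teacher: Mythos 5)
Your proof is correct, but it takes a genuinely different route from the paper's. The paper argues geometrically: it sets $X=U^{-1}(\K)\cap\K$ and $Y=U(\K)\cap\K$, shows both are closed hyperplanes of $\K$ with $U(X)=Y$, picks unit vectors $x\perp X$ and $y\perp Y$ in $\K$, and defines $Vu=Uu+\langle u,x\rangle(y-Ux)$; unitarity is then immediate because $V$ agrees with $U$ on $X$ and sends $x$ to $y$, so no computation with adjoints is needed and the norms of the ``defect vectors'' never enter. You instead work with the compression $W=PU|_{\K}$, identify the defect operators $I-W^*W$ and $I-WW^*$ as rank-one, and solve explicitly for a scalar $\delta$ making $W-\delta\langle\cdot,u\rangle w$ unitary; the two unitarity equations coincide thanks to the identities $Wu=-\beta w$, $W^*w=-\overline{\beta}u$ and $\|u\|^2+|\beta|^2=\|w\|^2+|\beta|^2=1$, all of which you derive correctly (and which amount to the standard structure of a unitary written in block form with respect to $\K\oplus\K^\perp$). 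Your approach is longer and computation-heavy but buys more: it exhibits the full one-parameter circle of admissible unitaries $V$ (the paper's $V$ corresponds to the choice $\delta=-(1+\beta)/\|u\|^2$ on that circle) and makes transparent exactly where each hypothesis is used --- the two non-invariance conditions give $u\neq0$ and $w\neq0$, and the two codimension-one conditions force the balance $\|u\|=\|w\|$ without which no single rank-one correction could work. One cosmetic quibble: the remark that without $\|u\|=\|w\|$ the two circles would ``meet only at $\delta=0$'' should say they would have no common solution, since $\delta=0$ satisfies neither equation; this does not affect the argument.
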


\begin{proof} Let $X=U^{-1}(\K)\cap \K$ and $Y=U(\K)\cap \K$.
Clearly $X$ and $Y$ are closed linear subspaces of $\K$. Moreover,
$U(X)=\K\cap U(\K)=Y$.

Since $U(\K)\subseteq\K_+$ and $\dim\K_+/\K=1$, we have $\dim
\K/X\leq 1$. Similarly, since $U^{-1}(\K)\subseteq\K_-$ and
$\dim\K_-/\K=1$, we see that $\dim \K/Y\leq 1$. On the other hand,
the relations $U(\K)\not\subseteq \K$ and $U^{-1}(\K)\not\subseteq
\K$ imply that $X\neq\K$ and $Y\neq\K$. Thus $\dim \K/X=\dim\K/Y=1$.
Now we can pick $x,y\in\K$ such that $\|x\|=\|y\|=1$, $x$ is
orthogonal to $X$, $y$ is orthogonal to $Y$ and
$\K=X\oplus\spann\{x\}=Y\oplus\spann\{y\}$. Define the operator
$V:\K\to \H$ be the formula
$$
Vu=Uu+\langle u,x\rangle(y-Ux),\quad u\in\K.
$$
It is easy to see that $U\bigr|_X=V\bigr|_X$ and $Vx=y$. Since
$U(X)=Y$, $V$ maps $X$ isometrically onto $Y$. Since $Vx=y$, $x$
spans the orthocomplement of $X$ and $y$ spans the orthocomplement
of $Y$, we see that $V$ maps $\K$ onto itself isometrically. Thus
$V\in L(\K)$ is a unitary operator. It remains to notice that
according to the last display, $U\bigr|_{\K}=V+A$, where the bounded
linear operator $A:\K\to\H$ is given by the formula $Au=\langle
u,x\rangle(y-Ux)$ and therefore has rank at most $1$.
\end{proof}

\begin{lemma} \label{ge} Let $U$ be a unitary operator acting on a
Hilbert space $\H$, $h\in\H$, $S\in L(\H)$ with
$S(\H)\subseteq\spann\{h\}$ and $\K$ be a closed linear subspace of
$\H$ invariant for the operator $U+S$. Assume also that $(U+S)(\K)$
is dense in $\K$ and either $h,U^{-1}h\in\K$ or $h,U^{-1}h\notin
\K$. Then the restriction $T\in L(\K)$ of $U+S$ to $\K$ can be
expressed as $T=V+R$, where $V\in L(\K)$ is a unitary operator on
$\K$ and $R\in L(\K)$ has rank at most $2$.
\end{lemma}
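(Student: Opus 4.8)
The plan is to split $T=U|_{\K}+S|_{\K}$ and coax the unitary part $U|_{\K}$ into the hypotheses of Lemma~\ref{te}, then absorb the rank-one defect it produces together with $S|_{\K}$ into the perturbation. Two general features make the bookkeeping painless. First, any unitary $V\in L(\K)$ and $T$ itself both map $\K$ into $\K$, so $R=T-V$ automatically lands in $L(\K)$ and I only need to control its rank. Second, since $S(\H)\subseteq\spann\{h\}$, the restriction $S|_{\K}$ has rank at most $1$; hence as soon as Lemma~\ref{te} supplies a unitary $V$ with $U|_{\K}=V+A$ and $A$ of rank at most $1$, the operator $R=A+S|_{\K}$ has rank at most $2$.

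First I would record two inclusions. For $u\in\K$ the invariance of $\K$ gives $Uu=(U+S)u-Su\in\K+\spann\{h\}$, so $U(\K)\subseteq\K_+:=\K+\spann\{h\}$, with no density needed. For the backward direction, $U^{-1}(U+S)u=u+U^{-1}Su\in\K+\spann\{U^{-1}h\}$, so $U^{-1}\bigl((U+S)(\K)\bigr)\subseteq\K_-:=\K+\spann\{U^{-1}h\}$; since $(U+S)(\K)$ is dense in $\K$, the space $\K_-$ is closed, and $U^{-1}$ is a homeomorphism of $\H$, passing to closures gives $U^{-1}(\K)\subseteq\K_-$. Here the density hypothesis is essential: I control $U^{-1}$ only on the range $(U+S)(\K)$, not a priori on all of $\K$.

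Next I would split on the dichotomy for $h$ and $U^{-1}h$. If $h,U^{-1}h\in\K$ then $\K_+=\K_-=\K$, so $U(\K)\subseteq\K$ and $U^{-1}(\K)\subseteq\K$; applying $U$ to the latter yields $\K\subseteq U(\K)$, whence $U(\K)=\K$ and $U|_{\K}$ is unitary. Then $Su=(U+S)u-Uu\in\K$ shows $S|_{\K}\in L(\K)$ has rank at most $1$, and $V=U|_{\K}$, $R=S|_{\K}$ work. If instead $h,U^{-1}h\notin\K$ then $\dim\K_+/\K=\dim\K_-/\K=1$; and if moreover $U(\K)\subseteq\K$, then $Su\in\K\cap\spann\{h\}=\{0\}$ forces $S|_{\K}=0$, while density together with closedness of the isometric image $U(\K)$ gives $U(\K)=\K$, so $T=U|_{\K}$ is already unitary and $R=0$ finishes this degenerate case.

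The remaining case, $h,U^{-1}h\notin\K$ with $U(\K)\not\subseteq\K$, is where Lemma~\ref{te} is invoked, and verifying its two non-containment hypotheses is the crux. The condition $U(\K)\not\subseteq\K$ holds by assumption; for $U^{-1}(\K)\not\subseteq\K$ I would argue by contradiction. If $U^{-1}(\K)\subseteq\K$, then $\K\subseteq U(\K)\subseteq\K_+$ with $U(\K)$ closed and $\K$ of codimension $1$ in $\K_+$, forcing $U(\K)=\K$ or $U(\K)=\K_+$; the first contradicts $U(\K)\not\subseteq\K$, and the second puts $h\in U(\K)$, i.e. $U^{-1}h\in\K$, contradicting the case assumption. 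With all six hypotheses of Lemma~\ref{te} in place, it delivers the unitary $V\in L(\K)$ and the rank-one $A:\K\to\H$, and $R=A+S|_{\K}=T-V$ completes the argument. The one genuine subtlety is exactly this: the non-containment conditions required by Lemma~\ref{te} are not given directly but must be extracted from the density hypothesis and from the $h\leftrightarrow U^{-1}h$ dichotomy, which is what keeps $\K_+$ and $\K_-$ simultaneously one-dimensional extensions of $\K$.
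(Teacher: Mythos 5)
Your proposal is correct and follows essentially the same route as the paper: establish $U(\K)\subseteq\K_+$ and (via density of $T(\K)$) $U^{-1}(\K)\subseteq\K_-$, dispose of the cases where $\K$ is $U$-invariant or $S|_{\K}=0$, and feed the remaining case into Lemma~\ref{te}. The only cosmetic difference is how the hypothesis $U^{-1}(\K)\not\subseteq\K$ is obtained --- you use a codimension-one argument on $\K\subseteq U(\K)\subseteq\K_+$, while the paper reads it off directly from $S|_{\K}\neq 0$ and $U^{-1}h\notin\K$ --- but both are valid and equally short.
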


\begin{proof} If $x\in\K$,
then $Ux=Tx-Sx\in \K_+=\spann(\K\cup\{h\})$. Thus
$U(\K)\subseteq\K_+$. Applying $U^{-1}$ to the equality $Ux=Tx-Sx$,
we obtain $U^{-1}Tx=x+U^{-1}Sx\in \K_-=\spann(\K\cup\{U^{-1}h\})$
for each $x\in \K$. Since $T$ has dense range, $U^{-1}(\K)\subseteq
\K_-$.

If $h\in\K$ and $U^{-1}h\in\K$ , then $\K_+=\K_-=\K$ and therefore
$\K$ is an invariant subspace for $U$ and $U^{-1}$. Hence the
restriction $V\in L(\K)$ of $U$ to $\K$ is unitary and $T=V+R$ with
$R=S\bigr|_{\K}$ being of rank at most 1. If $S\bigr|_{\K}=0$, then
$T$ is the restriction of $U$ to $\K$ and therefore $T$ is an
isometry. Since $T$ also has dense range, $T$ is unitary. Thus $T$
has the required shape with $V=T$ and $R=0$.

It remains to consider the case $h\notin\K$, $U^{-1}h\notin\K$ and
$S\bigr|_{\K}\neq 0$. Since $h\notin\K$ and $U^{-1}h\notin\K$, $\K$
is a  closed hyperplane in $\K_+$ and in $\K_-$. Since
$S\bigr|_{\K}\neq 0$, $S(\H)\subseteq\spann\{h\}$ and
$h,U^{-1}h\notin\K$, the equalities $Ux=Tx-Sx$ and
$U^{-1}Tx=x+U^{-1}Sx$ for $x\in\K$ imply that $U(\K)\not\subseteq
\K$ and $U^{-1}(\K)\not\subseteq \K$. Thus all conditions of
Lemma~\ref{te} are satisfied. By Lemma~\ref{te}, there is a unitary
operator $V\in L(\K)$ and a bounded linear operator $A:\K\to\H$ of
rank at most $1$ such that $U\bigr|_{\K}=V+A$. Thus $T=V+R$, where
$R=A+S\bigr|_{\K}$. Clearly $R=T-V$ takes values in $\K$ and has
rank at most $2$ as a sum of two operators $A$ and $S\bigr|_{\K}$
from $\K$ to $\H$ of rank at most $1$.
\end{proof}

\subsection{Proof of Theorem~\ref{main} modulo Lemma~\ref{main0}}

Lemma~\ref{main1} guarantees the existence of a unitary operator $U$
acting on a Hilbert space $\H$, $h\in\H$, $S\in L(\H)$ with
$S(\H)=\spann\{h\}$, a perfect compact subset $K$ of $\T$ and a
continuous map $\lambda\mapsto h_\lambda$ from $K$ to
$\H\setminus\{0\}$ such that (\ref{e1}) and (\ref{e2}) are
satisfied.

Let $\K$ be the space defined in (\ref{e2}). According to
(\ref{e1}), $\K$ is invariant for $U+S$. Let $T\in L(\K)$ be the
restriction of $U+S$ to $\K$. By (\ref{e1}), $Th_\lambda=\lambda
h_\lambda$ and therefore $h_\lambda$ are linearly independent for
$\lambda\in K$. By definition of $\K$, $\spann\{h_\lambda:\lambda\in
\K\}$ is a dense subspace of $\K$. Thus $\K$ is separable and
infinite dimensional. Corollary~\ref{bg} implies that $T$ is
hypercyclic.

On the other hand, the equalities $Th_\lambda=\lambda h_\lambda$
imply that $\spann\{h_\lambda:\lambda\in \K\}$ is contained in
$T(\K)$ and therefore $T(\K)$ is dense in $\K$. Then (\ref{e2}) and
Lemma~\ref{ge} imply that $T$ is a sum of a unitary operator and an
operator of rank at most 2 as required in Theorem~\ref{main}.

\section{Lemma~\ref{main0}: preparation and proof \label{s3}}

To make the idea of the proof of Lemma~\ref{main0} more transparent,
we note that the scalar product of the functions $f_1,f_2\in
L_2(\T)$ can be written in terms of a contour integral: $\langle
f_1,f_2\rangle=\frac1{2\pi
i}\ssub{\oint}{\T}\frac{f_1(z)\overline{f_2(z)}}{z}\,dz$. Thus
condition (\ref{m02}) reads as
$$
\oint_\T \frac{h(w)\overline{g(w)}}{w^2}\,dw=0,\quad \oint_\T
\frac{h(w)\overline{g(w)}}{(z-w)w}\,dw=2\pi i\ \ \text{and}\ \
\oint_\T \frac{h(w)\overline{g(w)}}{(z-w)w^2}\,dw=\frac{2\pi i}z\ \
\text{for $z\in K$}.
$$
Assuming that the function $\psi(z)=\frac{h(z)\overline{g(z)}}{2\pi
iz}$ is continuous and vanishes on $K$, the above display can be
rewritten as
$$
\oint_\T \frac{\psi(w)}{w}\,dw=0,\quad \oint_\T
\frac{\frac{\psi(w)}{w}-\frac{\psi(z)}{z}}{(z-w)}\,dw=1\ \
\text{and}\ \ \oint_\T \frac{\psi(w)-\psi(z)}{(z-w)w}\,dw=z^{-1}\ \
\text{for $z\in K$}.
$$
We prove Lemma~\ref{main0} by constructing $K$ and an appropriate
function $\psi$ and then splitting it into a product to recover $h$
and $g$.

\subsection{Auxiliary results}

The next few lemmas certainly represent known facts. We state them
in a convenient for our purposes form, different from the one
usually found in the literature. For the sake of completeness we
sketch their proofs. For a subset $A$ of a metric space $(M,d)$, the
symbol $\dist(x,A)$ stands for the distance from $x\in M$ to $A$:
$\smash{\dist(x,A)=\inf\limits_{y\in A}d(x,y)}$. Speaking of $\T$,
we always assume that it carries the metric inherited from $\C$.

\begin{lemma}\label{comp11} Let $F$ be an uncountable closed
subset of $\T$. Then there exists a perfect compact set $K\subset F$
such that $f_\alpha\in L_2(\T)$ for any $\alpha\in(0,1/2)$, where
$f_\alpha(z)=(\dist(z,K))^{-\alpha}$.
\end{lemma}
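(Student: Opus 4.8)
The plan is to realize $K$ as an extremely thin Cantor-type subset of $F$ and then to reduce the integrability of $f_\alpha$ to a convergence condition on the diameters of the pieces of a Cantor scheme. First I would pass to a perfect set: since $F$ is an uncountable closed subset of the Polish space $\T$, its Cantor--Bendixson kernel is a nonempty perfect compact subset of $F$, so without loss of generality I may assume that $F$ itself is perfect.

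Next I would build a Cantor scheme inside $F$. Fix a rapidly decreasing sequence $d_n\downarrow 0$ (to be specified at the end). Starting from a proper closed arc $A_\emptyset$ whose interior meets $F$, I would attach to every finite binary word $\sigma$ a closed arc $A_\sigma$ with $A_\sigma^\circ\cap F\neq\emptyset$ and $\mathrm{diam}\,A_\sigma\le d_{|\sigma|}$, together with two disjoint children $A_{\sigma0},A_{\sigma1}\subset A_\sigma^\circ$ whose interiors again meet $F$. This splitting is possible at every node because a nonempty relatively open piece of the perfect set $F$ is infinite, so it contains two distinct points of $F$, around which one can center small disjoint sub-arcs of diameter at most $d_{|\sigma|+1}$. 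Setting $K=\bigcap_n\bigcup_{|\sigma|=n}A_\sigma$, the usual Cantor-scheme argument shows that $K$ is a nonempty perfect compact set, and $K\subseteq F$ because each point of $K$ is a limit of points of $F$ lying in the shrinking arcs surrounding it and $F$ is closed. The only feature of the construction that matters for the estimate is that for every $n$ the set $K$ is covered by the $2^n$ arcs $A_\sigma$, $|\sigma|=n$, each of length at most $Cd_n$.

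I would then bound the measures of the neighborhoods of $K$ and feed them into a layer-cake formula. Writing $\beta=2\alpha\in(0,1)$, since $f_\alpha^2=\dist(\cdot,K)^{-\beta}$ I have
\[
\int_\T \dist(z,K)^{-\beta}\,d\mu=\beta\int_0^\infty t^{-\beta-1}\,\mu\bigl(\{z:\dist(z,K)<t\}\bigr)\,dt .
\]
The covering at level $n$ gives $\mu(\{\dist(\cdot,K)<t\})\le C2^n(d_n+t)$ for every $n$; applying this with the level matching the scale of $t$ (on the range $t\in[d_{n+1},d_n]$ use level $n+1$, where $t\ge d_{n+1}$ turns the bound into $\le C'2^{n+1}t$) and integrating the resulting geometric pieces, while noting that the tail $\int_{d_0}^\infty$ is harmless, reduces the whole integral to the single requirement $\sum_n 2^n d_n^{1-\beta}<\infty$, that is, $\sum_n 2^n d_n^{1-2\alpha}<\infty$.

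Finally I would choose $d_n$ so that this holds for every $\alpha\in(0,1/2)$ at once, and this uniformity is exactly the main obstacle: the exponent $s=1-2\alpha$ ranges over all of $(0,1)$, and as $s\to0^+$ the factor $d_n^{s}$ tends to $1$, so a merely exponential decay of $d_n$ can never overcome the $2^n$. I would therefore take a super-exponentially small sequence such as $d_n=2^{-n^2}$; then $2^n d_n^{s}=2^{\,n-sn^2}$, whose exponent tends to $-\infty$ for every fixed $s>0$, so the series converges for all $s\in(0,1)$ simultaneously. Hence $\int_\T f_\alpha^2\,d\mu<\infty$ for each $\alpha\in(0,1/2)$, which is the assertion; the two steps requiring genuine care are the scale-by-scale neighborhood estimate and the choice of $d_n$ dictated by the uniformity in $\alpha$.
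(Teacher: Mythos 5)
Your proof is correct, and the set $K$ you build is morally the same object as the paper's: a Cantor scheme inside $F$ whose $n$-th level pieces have super-exponentially small diameters, with the decay rate dictated precisely by the need to beat $2^n$ for every exponent $1-2\alpha\in(0,1)$ simultaneously (you take $2^{-n^2}$, the paper takes $1/n!$; both identify this uniformity as the crux). The differences are in the surrounding machinery. The paper first transfers the problem to a real interval (Lemma~\ref{comp}), selects the nested intervals with \emph{endpoints} in $F$ that are left/right accumulation points (so $K\subset F$ follows from density of these endpoints in $K$), and then verifies integrability by an exact computation of $\int_J\dist(x,K)^{-\alpha}\,dx$ over each complementary gap $J$, using $\lambda(K)=0$ and the one-dimensional structure of the complement. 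You instead work directly on $\T$, only require the interiors of your arcs to meet $F$ (recovering $K\subseteq F$ from closedness of $F$), and estimate the integral via the layer-cake identity together with the covering bound $\mu(\{\dist(\cdot,K)<t\})\leq C2^n(d_n+t)$ applied at the scale matching $t$. Your route is slightly less explicit but more robust: it never uses the interval/gap structure of $\T\setminus K$ and would carry over verbatim to Cantor schemes in higher-dimensional or more general metric measure settings, whereas the paper's gap-by-gap computation is cleaner but intrinsically one-dimensional. Both arguments are complete; the one small point worth making explicit in yours is that the sibling arcs at each level are pairwise disjoint (immediate from the construction), which is what guarantees that $K$ is perfect rather than just nonempty and compact.
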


The above lemma immediately follows from the next result.

\begin{lemma}\label{comp} Let $[a,b]$ be a bounded closed interval
in $\R$ and $F$ be an uncountable closed subset of $[a,b]$. Then
there exists a perfect compact set $K\subset F$ such that
\begin{equation}\label{int1}
\smash{\int_a^b (\dist(x,K))^{-\alpha}\lambda(dx)<\infty,\ \ \
\text{for each $\alpha<1$},}\PLA
\end{equation}
where $\lambda$ is the Lebesgue measure on the real line.
\end{lemma}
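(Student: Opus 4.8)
The plan is to build $K$ as a Cantor-type set sitting inside $F$, controlling its complementary gaps so precisely that the distance function has only mild, integrable singularities. First I would reduce to the perfect case: since $F$ is an uncountable closed (hence Polish) set, its perfect kernel $P$ is a nonempty perfect compact subset of $F$ (Cantor--Bendixson), and any perfect $K\subseteq P$ is automatically a perfect compact subset of $F$. I record two facts I will use repeatedly: every point of a perfect set is a condensation point (by Baire category, a countable compact set has an isolated point), so near every point of $P$ there is uncountable mass; and on $[a,b]\setminus[\min K,\max K]$ the distance to $K$ grows linearly, so $\int_a^{\min K}(\dist(x,K))^{-\alpha}\,dx+\int_{\max K}^b(\dist(x,K))^{-\alpha}\,dx=\frac{(\min K-a)^{1-\alpha}+(b-\max K)^{1-\alpha}}{1-\alpha}<\infty$ for $\alpha<1$. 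Hence only the interior gaps of $K$ will require work.

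Next I would construct $K$ by a binary scheme of closed intervals $I_s=[\ell_s,r_s]$, indexed by finite $0$--$1$ strings $s$, with the invariant that $\ell_s,r_s\in P$, that $P\cap I_s$ is uncountable, and that $\ell_s$ (resp.\ $r_s$) is a one-sided condensation point of $P$ from the right (resp.\ left). The decisive design choice is to place the two children \emph{at the endpoints of the parent}: given $I_s$ at level $n$, I pick $u\in P$ near $\ell_s$ and $v\in P$ near $r_s$ with $u-\ell_s\le\theta_{n+1}$, $r_s-v\le\theta_{n+1}$, each of $P\cap[\ell_s,u]$ and $P\cap[v,r_s]$ uncountable (possible because $\ell_s,r_s$ are one-sided condensation points; taking $u,v$ inside the perfect kernels of $P\cap[\ell_s,\ell_s+\theta_{n+1}]$ and $P\cap[r_s-\theta_{n+1},r_s]$ preserves the invariant), and set $I_{s0}=[\ell_s,u]$, $I_{s1}=[v,r_s]$. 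Here $\theta_n=2^{-n^2}$, and if $I_s$ happens to be smaller than $3\theta_{n+1}$ I instead take children of diameter $\tfrac13\dim I_s$. This keeps $\dim I_s\le\theta_{|s|}$, leaves a \emph{single} middle gap $G_s=(u,v)$ per split with no side pieces discarded, and forces diameters to $0$; thus $K=\bigcap_n\bigcup_{|s|=n}I_s$ is a perfect compact subset of $P\subseteq F$, and the inherited endpoints (followed down the all-ones or all-zeros branch) show $u,v\in K$.

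Finally I would compute the integral over the interior. The complement of $K$ inside $[\min K,\max K]$ is exactly the disjoint union of the gaps $G_s=(u_s,v_s)$; since $u_s,v_s\in K$ and $G_s\cap K=\emptyset$, one has $\dist(x,K)=\min(x-u_s,\,v_s-x)$ on $G_s$, whence $\int_{G_s}(\dist(x,K))^{-\alpha}\,dx=\frac{2^{\alpha}}{1-\alpha}\,|G_s|^{1-\alpha}$. Because $|G_s|\le\dim I_s\le\theta_{|s|}$ and there are $2^n$ gaps at level $n$, summing gives $\sum_{s}|G_s|^{1-\alpha}\le\sum_{n\ge0}2^{n}\theta_n^{1-\alpha}=\sum_{n\ge0}2^{\,n-n^2(1-\alpha)}<\infty$ for every $\alpha<1$. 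Adding the two finite exterior contributions yields (\ref{int1}).

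The hard part is the tension between two requirements that must hold \emph{simultaneously}: the gaps must shrink fast enough that $\sum_n 2^n|G|^{1-\alpha}$ converges for \emph{all} $\alpha<1$ at once, which forces super-geometric (not merely geometric) decay of gap sizes; yet the interval diameters must tend to $0$ so that $K\subseteq F$. These pull against each other, because a balanced split with a genuinely small gap would leave one child almost as large as its parent, stalling the diameter decay. The resolution I rely on is precisely the ``hug the endpoints'' move: attaching both children to the two endpoints of the parent and shrinking them super-geometrically, so the lone middle gap, though large relative to the tiny children, is absolutely of size $\le\dim I_s\le 2^{-n^2}$. The accompanying subtlety is maintaining the one-sided condensation-point invariant, which is exactly what guarantees the new end-intervals still carry uncountable mass and the recursion can continue.
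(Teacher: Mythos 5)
Your proof is correct and follows essentially the same route as the paper's: a Cantor-type nested-interval construction inside $F$ whose children are attached to the parent's endpoints, endpoints kept at one-sided condensation (accumulation) points so the recursion can continue, super-geometrically decaying diameters ($2^{-n^2}$ where the paper uses $1/n!$), and the identical gap-by-gap computation $\int_{G}(\dist(x,K))^{-\alpha}\,dx=\frac{2^{\alpha}}{1-\alpha}|G|^{1-\alpha}$ summed against the $2^{n}$ gaps per level. The only point worth adding explicitly is that $\lambda(K)=0$ (immediate from $2^{n}\theta_{n}\to0$), so the integrand being infinite on $K$ does not affect (\ref{int1}).
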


\begin{proof} For a subset $A$ of the real line, we say that $x\in\R$ is a {\it
left accumulation point} for $A$ if $(x-\epsilon,x)\cap A$ is
uncountable for any $\epsilon>0$. Similarly $x$ is a {\it right
accumulation point} for $A$ if $(x,x+\epsilon)\cap A$ is uncountable
for any $\epsilon>0$. It is a well-known fact and an easy exercise
that for any uncountable subset $A$ of $\R$, all points of $A$
except for countably many are left and right accumulation points of
$A$. We construct $K$ by means of a procedure similar to the one
used to construct the standard Cantor set. For each $n\in\N$, let
$\Omega_n=\{0,1\}^n$ be endowed with the lexicographical ordering:
$\epsilon<\epsilon'$ if and only if $\sum\limits_{j=1}^n
\epsilon_j2^{n-j}<\sum\limits_{j=1}^n \epsilon'_j2^{n-j}$. Using the
fact that all points of $F$, except for countably many, are left and
right accumulation points for $F$, we can easily construct
(inductively with respect to $n$) elements
$a_\epsilon^n,b_\epsilon^n\in F$ for $n\in\N$ and
$\epsilon\in\Omega_n$ such that:
\begin{align}
&a^{n+1}_{\epsilon,0}=a^{n}_{\epsilon}\ \ \text{and}\ \
b^{n+1}_{\epsilon,1}=b^{n}_{\epsilon}\ \ \text{for any $n\in\N$ and
$\epsilon\in\Omega_n$};\label{com1}
\\
&\text{$a_\epsilon^n<b_\epsilon^n<a^n_{\epsilon'}<b^n_{\epsilon'}$
for $n\in\N$, $\epsilon,\epsilon'\in\Omega_n$,
$\epsilon<\epsilon'$;}\label{com2}
\\
&\text{$b^n_{\epsilon}-a^n_{\epsilon}<\frac1{n!}$ for any $n\in\N$
and $\epsilon\in\Omega_n$;}\label{com3}
\\
&\text{$a_\epsilon^n$ is a right accumulation point for $F$ for any
$n\in\N$ and $\epsilon\in\Omega_n$;}\label{com4}
\\
&\text{$b_\epsilon^n$ is a left accumulation point for $F$ for any
$n\in\N$ and $\epsilon\in\Omega_n$.}\label{com5}
\end{align}
We do not really need conditions (\ref{com4}) and (\ref{com5}) in
what follows. They are included in order to enable us to run the
inductive procedure. Now we can define
\begin{equation}\label{K}
\smash{K=\bigcap\limits_{n=1}^\infty
\bigcup_{\epsilon\in\Omega_n}[a_\epsilon^n,b_\epsilon^n].}\PLA
\end{equation}
Compactness and non-emptiness of $K$ are obvious. Actually, $K$ is
homeomorphic to $\{0,1\}^\N$ with the 2-element space $\{0,1\}$
carrying the discrete topology (=homeomorphic to the standard Cantor
set). Indeed, the map from $\{0,1\}^\N$ to $K$, which sends a
$0{-}1$ sequence $\{\epsilon_1,\epsilon_2,\dots\}$ to the unique
common point of the nested sequence
$[a^n_{\epsilon_1,\dots,\epsilon_n},b^n_{\epsilon_1,\dots,\epsilon_n}]$
of closed intervals is a homeomorphism. Thus $K$ is perfect. The
above observations show also that the set $A=\{a^n_\epsilon:n\in\N,\
\epsilon\in\Omega_n\}$ is dense in $K$. Since $A\subset F$ and $F$
is closed, $K\subset F$. It remains to show that (\ref{int1}) is
satisfied. According to (\ref{com3}),
$$
\lambda\left(
\bigcup_{\epsilon\in\Omega_n}[a_\epsilon^n,b_\epsilon^n] \right)
<\frac{2^n}{n!}\to 0\ \ \text{as}\ \ n\to \infty.
$$
By (\ref{K}), $\lambda(K)=0$. Clearly $[a,b]\setminus K$ is the
union of disjoint open intervals $I_0=(a,\alpha^1_0)$,
$I_1=(\beta^1_1,b)$ and
$J^n_j=(\beta^n_{\epsilon^j},\alpha^n_{\epsilon^{j+1}})$ for
$n\in\N$ and $1\leq j\leq 2^{n}-1$, where
$\Omega_n=\{\epsilon^1,\dots,\epsilon^{2^n}\}$,
$\epsilon^1<{\dots}<\epsilon^{2^n}$. Condition (\ref{com3}) and the
fact that each $J_j^n$ is contained in one of the intervals of the
shape $[a_\epsilon^{n-1},b_\epsilon^{n-1}]$ implies that the length
$\lambda(J^n_j)$ satisfies $\lambda(J^n_j)<\frac1{(n-1)!}$ for
$n\geq 2$. Fix $\alpha<1$. Direct calculation shows that the
function $\dist(\cdot,K)^{-\alpha}$ is integrable on $I_0$, $I_1$
and each of $J^n_j$ and
\begin{equation}\label{jnj}
 \int_{J^n_j}
(\dist(x,K))^{-\alpha}\lambda(dx)=2\int_0^{\lambda(J^n_j)/2}
t^{-\alpha}\,dt=\frac{2^\alpha
(\lambda(J^n_j))^{1-\alpha}}{1-\alpha}.
\end{equation}
Since $\lambda(K)=0$, we see that (\ref{int1}) is equivalent to
$$
\sum_{n=2}^\infty\sum_{j=1}^{2^n-1} \int_{J^n_j}
(\dist(x,K))^{-\alpha}\lambda(dx)<\infty.
$$
Since $\lambda(J^n_j)<\frac1{(n-1)!}$, from (\ref{jnj}) it follows
that convergence of the above series reduces to convergence of
$\sum\limits_{n=2}^\infty \frac{2^n}{((n-1)!)^{1-\alpha}}$. Thus
(\ref{int1}) is satisfied.
\end{proof}

For $0<\alpha\leq 1$, the symbol $H_\alpha(\T)$ stands for the space
of functions $f:\T\to\C$ satisfying the H\"older condition with the
exponent $\alpha$. That is, $f\in H_\alpha(\T)$ if and only if there
is $C>0$ such that $|f(z)-f(w)|\leq C|z-w|^\alpha$ for all
$z,w\in\T$. The next lemma provides a formula, which is a variant of
the integral formula for the conjugate function. It is, of course,
true under much weaker restrictions on the function involved.

\begin{lemma}\label{conj1}Let $\{a_n\}_{n\in\Z}$ be a sequence of
complex numbers such that
$\sum\limits_{n=-\infty}^\infty|a_n|<\infty$ and the function
$f:\T\to\C$, $f(z)=\sum\limits_{n=-\infty}^\infty a_nz^n$ belongs to
$H_\alpha(\T)$ for some $\alpha\in (0,1]$. Then for each $z\in\T$,
the function $w\mapsto \frac{w(f(w)-f(z))}{z-w}$ is Lebesgue
integrable and
\begin{equation}\label{conj2}
\smash{\int_\T \frac{w(f(w)-f(z))}{z-w}\,\mu(dw)=f_-(z),\ \ \
\text{where}\ \ f_-(z)=\sum\limits_{n=-\infty}^{-1} a_nz^n.}\PLAN
\end{equation}
\end{lemma}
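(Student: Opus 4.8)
The plan is to verify the formula first for monomials $f(z)=z^n$ and then to pass to the general case by an approximation that respects the H\"older bound. Integrability of $w\mapsto \frac{w(f(w)-f(z))}{z-w}$ is immediate: since $|w|=1$ and $|f(w)-f(z)|\leq C|w-z|^\alpha$, the modulus of the integrand is at most $C|w-z|^{\alpha-1}$, which is $\mu$-integrable on $\T$ precisely because $\alpha>0$ (the only possible singularity is at $w=z$, near which arc length is comparable to $|w-z|$, and $\int_0^\pi t^{\alpha-1}\,dt<\infty$); away from $z$ the integrand is bounded.

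For $f(z)=z^n$ I would compute $I_n(z):=\int_\T \frac{w(w^n-z^n)}{z-w}\,\mu(dw)$ explicitly, using the elementary identity $\frac{w^n-z^n}{z-w}=-\sum_{k=0}^{n-1}w^kz^{n-1-k}$ for $n\geq1$, together with its analogue $\frac{w^{-m}-z^{-m}}{z-w}=\sum_{k=0}^{m-1}w^{k-m}z^{-1-k}$ for $n=-m\leq-1$. Multiplying by $w$, integrating term by term against $\mu$, and using $\int_\T w^j\,\mu(dw)=\delta_{j,0}$, I find that every term vanishes when $n\geq0$, whereas for $n=-m\leq-1$ exactly the term $k=m-1$ survives and contributes $z^{-m}=z^n$. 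Hence $I_n(z)=0$ for $n\geq0$ and $I_n(z)=z^n$ for $n\leq-1$, which matches $f_-$ in (\ref{conj2}); by linearity the formula holds for every trigonometric polynomial.

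The one genuine difficulty is passing from trigonometric polynomials to $f$, and I expect this to be the main obstacle: the kernel $1/(z-w)$ forbids both naive term-by-term summation of the Fourier series and an appeal to mere uniform convergence, because taking absolute values destroys the cancellation (indeed $\int_\T |w^n-z^n|/|z-w|\,\mu(dw)$ grows with $n$). To circumvent this I would approximate $f$ not by its partial Fourier sums but by its Fej\'er means $\sigma_N f(z)=\sum_{|n|\leq N}\bigl(1-\tfrac{|n|}{N+1}\bigr)a_nz^n$. These are trigonometric polynomials; they converge to $f$ uniformly (which follows at once from $\sum_n|a_n|<\infty$); and, crucially, they obey a \emph{uniform} H\"older bound, since $\sigma_N f(z)=\int_\T f(zw)K_N(w)\,\mu(dw)$ with the non-negative Fej\'er kernel $K_N$ of total mass $1$, so that $|\sigma_N f(z)-\sigma_N f(z')|\leq\int_\T|f(zw)-f(z'w)|K_N(w)\,\mu(dw)\leq C|z-z'|^\alpha$ with the same constant $C$ as for $f$ (here $|zw-z'w|=|z-z'|$ as $|w|=1$).

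Setting $g_N=\sigma_N f-f$, I then have $\|g_N\|_\infty\to0$ together with the uniform estimate $|g_N(w)-g_N(z)|\leq 2C|w-z|^\alpha$. Splitting $\T$ into $\{|w-z|<\delta\}$ and its complement, the near part of $\int_\T\frac{w(g_N(w)-g_N(z))}{z-w}\,\mu(dw)$ is bounded by $2C\int_{|w-z|<\delta}|w-z|^{\alpha-1}\,\mu(dw)\leq C'\delta^\alpha$ uniformly in $N$, while for fixed $\delta$ the far part is at most $2\|g_N\|_\infty/\delta\to0$. A standard $\epsilon/2$ argument (choose $\delta$ first, then $N$) gives $\int_\T\frac{w(\sigma_N f(w)-\sigma_N f(z))}{z-w}\,\mu(dw)\to\int_\T\frac{w(f(w)-f(z))}{z-w}\,\mu(dw)$. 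Since the left-hand side equals $\sum_{n=-N}^{-1}\bigl(1-\tfrac{|n|}{N+1}\bigr)a_nz^n$ by the monomial computation, and this tends to $f_-(z)$ because $\sum_n|a_n|<\infty$, the identity (\ref{conj2}) follows. The moral is that uniform H\"older control of the approximants is exactly what is needed, which is why Fej\'er means (convolution with a positive kernel) rather than partial sums are the correct tool.
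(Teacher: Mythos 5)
Your proposal is correct and follows essentially the same route as the paper: verify the identity for trigonometric polynomials, then approximate $f$ by its Fej\'er means, whose crucial feature in both arguments is that convolution with the positive, mass-one Fej\'er kernel preserves the H\"older constant of $f$. The only (immaterial) differences are that you compute the monomial case via the algebraic factorization and orthogonality of $\{w^j\}$ where the paper invokes the Cauchy formula, and you pass to the limit by an explicit near/far splitting where the paper uses dominated convergence with the majorant $C|z-w|^{\alpha-1}$.
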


\begin{proof}Since $f\in H_\alpha(\T)$, there exists $C>0$ such that
$|f(z)-f(w)|\leq C|z-w|^\alpha$ for any $z,w\in\T$. For fixed
$z\in\T$, let $f_z(w)=\frac{w(f(w)-f(z))}{w-z}$. Then $|f_z(w)|\leq
C|z-w|^{\alpha-1}$. It immediately follows that $f_z\in L_1(\T)$. It
remains to verify (\ref{conj2}). First, observe that (\ref{conj2})
is equivalent to
\begin{equation}\label{conj3}
\smash{\frac1{2\pi i}\oint_\T
\frac{f(w)-f(z)}{z-w}\,dw=f_-(z),}\PLAN
\end{equation}
where the contour $\T$ is encircled counterclockwise. Using the
Cauchy formula, one can easily show that for any $n\in\Z$,
$$
\frac1{2\pi i}\oint_\T \frac{w^n-z^n}{z-w}\,dw=
\left\{\begin{array}{ll}0&\text{if $n\geq 0$},\\
z^{n}&\text{if $n<0$.}\end{array}\right.
$$
It follows that (\ref{conj3}) and therefore (\ref{conj2}) is
satisfied for $f$ being a trigonometric polynomial (=a Laurent
polynomial). Now consider the sequence $\{p_n\}_{n\in\N}$ of Fej\'er
sums for $f$:
$$
p_n(z)=\sum_{j=-n}^n \Bigl(1-\frac{|j|}{n+1}\Bigr)a_jz^j.
$$
Clearly $\{p_n\}$ converges to $f$ uniformly on $\T$ as
$n\to\infty$. On the other hand, since $p_n$ is the convolution of
$f$ with the $n^{\rm th}$ Fej\'er kernel \cite{z} and the latter is
positive and has integral $1$, we immediately have
$|p_n(z)-p_n(w)|\leq C|z-w|^\alpha$ for any $z,w\in\T$ and any
$n\in\N$ (the continuity modulus of any Fej\'er sum of a continuous
function on $\T$ does not exceed the continuity modulus of the
function itself). Hence for each $w\in\T$, $w\neq z$, we have
$$
\frac{w(p_n(w)-p_n(z))}{z-w}\to \frac{w(f(w)-f(z))}{z-w}\ \
\text{and}\ \ \biggl|\frac{w(p_n(w)-p_n(z))}{z-w}\biggr|\leq
C|z-w|^{\alpha-1}\ \ \text{for any $n\in\N$.}
$$
Applying the Lebesgue dominated convergence theorem and the fact
that (\ref{conj2}) is true for trigonometric polynomials, we obtain
\[
\int_\T \frac{w(f(w)-f(z))}{z-w}\,\mu(dw)=\lim_{n\to\infty}\int_\T
\frac{w(p_n(w)-p_n(z))}{z-w}\,\mu(dw)=\lim_{n\to\infty}(p_n)_-(z)=f_-(z).\qedhere
\]
\end{proof}

\begin{lemma}\label{funct} Let $a\in\R$ and $b\in\N$ be such that $b>a>1$ and
$f:\T\to \C$ be defined by the formula
$$
\smash{f(z)=\sum_{n=1}^\infty a^{-n}z^{b^n}.}\PLA
$$
Then $f\in H_{\alpha}(\T)$, where $\alpha=\log_b a$.
\end{lemma}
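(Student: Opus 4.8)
The plan is to establish the H\"older estimate $|f(z)-f(w)|\leq C|z-w|^\alpha$ directly, exploiting the lacunary structure of the series. Writing $a=b^\alpha$ (which is precisely the relation $\alpha=\log_b a$), the coefficients become $a^{-n}=b^{-\alpha n}$, so the magnitudes decay geometrically while the frequencies $b^n$ grow geometrically; the two competing effects balance exactly at the exponent $\alpha$. Since $a>1$, the series $\sum_{n} a^{-n}$ converges, so $f$ is a uniform limit of trigonometric polynomials, hence continuous and bounded with $\|f\|_\infty\leq (a-1)^{-1}$. Because $\T$ has finite diameter, whenever $|z-w|$ stays bounded away from $0$ the asserted inequality holds trivially after enlarging $C$; thus it suffices to treat small $\delta:=|z-w|$, say $\delta\leq 1/b$.

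First I would split the difference at a cutoff index $N$, to be chosen:
$$f(z)-f(w)=\sum_{n=1}^{N}a^{-n}(z^{b^n}-w^{b^n})+\sum_{n=N+1}^\infty a^{-n}(z^{b^n}-w^{b^n}),$$
and estimate the two pieces by different devices. For the low-frequency part I use the elementary bound $|z^k-w^k|\leq k|z-w|$, valid for $z,w\in\T$ (factor $z^k-w^k=(z-w)(z^{k-1}+\dots+w^{k-1})$ and bound each of the $k$ summands by $1$), which gives
$$\sum_{n=1}^{N}a^{-n}|z^{b^n}-w^{b^n}|\leq \delta\sum_{n=1}^{N}(b/a)^n\leq C_1\,\delta\,(b/a)^N,$$
the geometric sum growing since $b/a>1$. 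For the high-frequency part I use the trivial bound $|z^{b^n}-w^{b^n}|\leq 2$, obtaining
$$\sum_{n=N+1}^\infty a^{-n}|z^{b^n}-w^{b^n}|\leq 2\sum_{n=N+1}^\infty a^{-n}\leq C_2\,a^{-N},$$
which converges because $a>1$.

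The crux is to place the cutoff so that both bounds are comparable to $\delta^\alpha$. I would take $N=\lceil\log_b(1/\delta)\rceil$, so that $1/\delta\leq b^N\leq b/\delta$. The upper bound $b^N\leq b/\delta$ turns the low-frequency estimate into $C_1\delta(b/a)^N=C_1\,\delta\,b^{(1-\alpha)N}\leq C_1 b^{1-\alpha}\delta^\alpha$ (using $a=b^\alpha$ and $0<\alpha<1$), while the lower bound $b^N\geq 1/\delta$ turns the high-frequency estimate into $C_2 a^{-N}=C_2 b^{-\alpha N}\leq C_2\delta^\alpha$. Adding the two yields $|f(z)-f(w)|\leq C|z-w|^\alpha$ with $C=C_1 b^{1-\alpha}+C_2$, as required. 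I do not anticipate a genuine obstacle here: this is the standard splitting argument for lacunary series, and the only point requiring care is the correct choice of $N$ so that the two one-sided geometric sums meet at the exponent $\alpha$. The inequality $0<\alpha<1$, which is forced by $1<a<b$, is exactly what makes the factor $(b/a)^n$ increase and what lets both tails close up against $\delta^\alpha$.
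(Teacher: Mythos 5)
Your proof is correct and follows essentially the same route as the paper: both split the lacunary series at an index $N$ with $b^{-N}\asymp|z-w|$, bound the low-frequency terms via $|z^k-w^k|\leq k|z-w|$ and the tail via $|z^{b^n}-w^{b^n}|\leq 2$, and sum the two geometric series to get $C|z-w|^{\alpha}$. The only differences are cosmetic (choice of cutoff up to a constant and explicit tracking of $C$), so there is nothing to add.
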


\begin{proof} Let $z,w\in\T$,
$z\neq w$. Pick $m\in\N$ such that $b^{-m}\leq\frac{|z-w|}2\leq
b^{1-m}$. Clearly $|f(z)-f(w)|\leq \sum\limits _{j=1}^\infty
a^{-j}|z^{b^j}-w^{b^j}|$. Using the estimate $|z^{b^j}-w^{b^j}|\leq
2$  for $j\geq m$ and the estimate $|z^{b^j}-w^{b^j}|\leq
b^{j}|z-w|$ for $j<m$, we obtain
$$
\smash{|f(z)-f(w)|\leq|z-w|\sum_{j=1}^{m-1}a^{-j}b^{j}+2\sum_{j=m}^\infty
a^{-j}\leq \frac{a|z-w|}{b-a}(b/a)^m+\frac{2a}{a-1}a^{-m}.}\PLA
$$
Since $b^{-m}\leq\frac{|z-w|}2\leq b^{1-m}$, we have
$a^{-m}=(b^{-m})^\alpha\leq 2^{-\alpha}|z-w|^\alpha$ and
$(b/a)^m=(b^{-m})^{\alpha-1}\leq (2b)^{1-\alpha}|z-w|^{\alpha-1}$.
Hence, according to the above display, $|f(z)-f(w)|\leq
C|z-w|^{\alpha}$ with
$C=\frac{a2^{-\alpha}}{b-a}+\frac{2a(2b)^{1-\alpha}}{a-1}$. Thus
$f\in H_{\alpha}(\T)$, as required.
\end{proof}

\begin{lemma}\label{funct1} Let $\alpha\in (1/4,1]$, $f\in
H_\alpha(\T)$, $K\subset \T$  be a non-empty compact set and
$f(z)=0$ for every $z\in K$. Then for each $z\in K$, the function
$f_z(w)=\frac{f(w)\,({\tt dist}(w,K))^\alpha}{z-w}$ belongs to
$L_2(\T)$. Moreover the map $z\mapsto f_z$ from $K$ to $L_2(\T)$ is
continuous.
\end{lemma}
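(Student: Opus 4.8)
The plan is to control $f_z$ through a single pointwise bound and then reduce both assertions to the integrability of an explicit power of $|z-w|$ near the singularity $w=z$. First I would record the basic consequence of the hypotheses. Since $f\in H_\alpha(\T)$, there is a constant $C>0$ with $|f(w)-f(w')|\le C|w-w'|^\alpha$ for all $w,w'\in\T$. Fixing $w\in\T$ and choosing a nearest point $w_0\in K$ (so that $|w-w_0|=\dist(w,K)$), the vanishing of $f$ on $K$ gives $|f(w)|=|f(w)-f(w_0)|\le C(\dist(w,K))^\alpha$. Consequently, for any $z\in K$,
\[
|f_z(w)|=\frac{|f(w)|\,(\dist(w,K))^\alpha}{|z-w|}\le\frac{C(\dist(w,K))^{2\alpha}}{|z-w|}\le C|z-w|^{2\alpha-1},
\]
where the last step uses $\dist(w,K)\le|z-w|$, valid because $z\in K$.

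From here the membership $f_z\in L_2(\T)$ is immediate: $|f_z(w)|^2\le C^2|z-w|^{4\alpha-2}$, and since $|z-w|^{-s}$ is $\mu$-integrable on $\T$ exactly when $s<1$, the exponent $2-4\alpha$ is below $1$ precisely when $\alpha>1/4$. This is where the hypothesis $\alpha\in(1/4,1]$ first enters.

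For continuity, given $z,z'\in K$ I would write the difference over a common denominator,
\[
f_z(w)-f_{z'}(w)=f(w)(\dist(w,K))^\alpha\,\frac{z'-z}{(z-w)(z'-w)},
\]
set $\delta=|z-z'|$, and split $\T$ into the near arc $A=\{w:|w-z|<2\delta\}$ and its complement $B$. On $B$ one has $|z'-w|\ge|z-w|/2$, so combining with the pointwise bounds above yields $|f_z(w)-f_{z'}(w)|\le 2C\delta\,|z-w|^{2\alpha-2}$; integrating the square over $B$ reduces to $\delta^2\!\int_{|w-z|\ge 2\delta}|z-w|^{4\alpha-4}\,\mu(dw)$, which tends to $0$ with $\delta$ (it is $O(\delta^{4\alpha-1})$ when $\alpha<3/4$) again because $\alpha>1/4$. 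On the shrinking arc $A$ I would instead estimate $\int_A|f_z|^2$ and $\int_A|f_{z'}|^2$ separately using $|f_z(w)|\le C|z-w|^{2\alpha-1}$ (noting that $A\subseteq\{w:|w-z'|<3\delta\}$), each being $O(\delta^{4\alpha-1})\to0$. Adding the pieces gives $\|f_z-f_{z'}\|_{L_2(\T)}\to0$ as $\delta\to0$, which is the asserted continuity.

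The main obstacle is the continuity estimate, where the singularity of $f_z$ moves with $z$, so one cannot simply invoke dominated convergence against a fixed $L_2$ majorant. Cutting $\T$ at the scale $2\delta$ decouples the two singularities and lets each piece be handled by the same power estimate; in every case the bookkeeping collapses to the single threshold $\alpha>1/4$ that already governed membership, so nothing beyond the elementary integrals $\int_0^{c\delta}t^{4\alpha-2}\,dt$ and $\int_{2\delta}^{\pi}t^{4\alpha-4}\,dt$ is needed.
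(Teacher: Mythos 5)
Your proof is correct and follows essentially the same route as the paper: the pointwise bound $|f_z(w)|\le C|z-w|^{2\alpha-1}$ from the H\"older condition and the vanishing on $K$, the common-denominator formula for $f_z-f_{z'}$, and a splitting of $\T$ at the scale $|z-z'|$ with power counting that reduces everything to the threshold $\alpha>1/4$. The only cosmetic difference is that on the near arc you bound $\|f_z\|$ and $\|f_{z'}\|$ separately by the triangle inequality, whereas the paper estimates the difference directly after parametrizing $z$ and $z'$ symmetrically; both yield the same $O(|z-z'|^{4\alpha-1})$ control.
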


\begin{proof} Since $f\in H_\alpha(\T)$, there is $C>0$ such that
$|f(z)-f(w)|\leq C|z-w|^\alpha$ for any $z,w\in \T$.

Let $z\in K$. Since $f(z)=0$, we have $|f(w)|\leq C|z-w|^\alpha$ for
each $w\in\T$. Moreover, $\dist(w,K)\leq |w-z|$ for each $w\in\T$.
Thus $|f_z(w)|\leq C|z-w|^{2\alpha-1}$ for any $w\in\T$. Since
$\alpha>\frac14$ it follows that $f_z\in L_2(\T)$ for any $z\in K$.

It remains to show that the map $z\mapsto f_z$ from $K$ to $L_2(\T)$
is continuous. Clearly it is enough to show that there is
$c(\alpha)>0$ such that $\|f_z-f_s\|^2\leq
c(\alpha)|z-s|^{4\alpha-1}$ for any $z,s\in K$. In order to get rid
of the dead weight of constants, we temporarily assume the following
notation. We write $A\ll B$ if there is a constant $c$ depending on
$\alpha$ only such that $A\leq cB$. Thus we are going to show that
$\|f_z-f_s\|^2\ll |z-s|^{4\alpha-1}$. Let $z,s\in K$, $z\neq s$.
Since
\begin{align*}
&|f_z(w)-f_s(w)|=\frac{|z-s||f(w)|\,({\tt
dist}(w,K))^\alpha}{|z-w||s-w|},
\\
&\dist(w,K)\leq \min\{|w-z|,|w-s|\}\ \ \text{and\ \ $|f(w)|\leq
C(\min\{|w-z|,|w-s|\})^\alpha\!$},
\end{align*}
we see that
$$
|f_z(w)-f_s(w)|\ll
|z-s|\frac{(\min\{|w-z|,|w-s|\})^{2\alpha}\!\!}{|z-w||s-w|}.
$$
Hence
$$
\|f_z-f_s\|^2\ll |z-s|^2\int_\T
\frac{(\min\{|w-z|,|w-s|\})^{4\alpha}}{|z-w|^2|s-w|^2}\,\mu(dw).
$$
As for any two distinct points in the unit circle, for $z$ and $s$
we can find $a,b\in\R$ such that $0<b\leq \frac{\pi}{2}$ and
$\{z,s\}=\{e^{i(a+b)},e^{i(a-b)}\}$. Clearly $b\ll |z-s|\ll b$.
Using this notation, the last display and straightforward symmetry
considerations, we get
$$
\|f_z-f_s\|^2\ll b^2\int_0^\pi
\frac{|e^{it}-e^{ib}|^{4\alpha}}{|e^{it}-e^{ib}|^2|e^{it}-e^{-ib}|^2}\,dt.
$$
Since $|t-b|\ll |e^{it}-e^{ib}|\ll |t-b|$ and $|t+b|\ll
|e^{it}-e^{-ib}|\ll |t+b|$ for $t\in[0,\pi]$, we have
$$
\|f_z-f_s\|^2\ll b^2\int_0^\pi
\frac{|t-b|^{4\alpha-2}}{|t+b|^2}\,dt.
$$
We split the integration interval $[0,\pi]$ into the union of
$[0,2b]$ and $[2b,\pi]$. Since $|t+b|^{-2}\ll b^{-2}$ for $0\leq
t\leq 2b$ and $\frac{|t-b|^{4\alpha-2}}{|t+b|^2}\ll t^{4\alpha-4}$
for $2b\leq t\leq \pi$, we get
$$
b^2\!\!\int_0^{2b} \frac{|t-b|^{4\alpha-2}}{|t+b|^2}\,dt\ll\!\!
\int_0^{2b}|t-b|^{4\alpha-2}\,dt\ll b^{4\alpha-1}\ \ \text{and}\ \
b^2\!\!\int_{2b}^\pi \frac{|t-b|^{4\alpha-2}\!\!}{|t+b|^2}\,dt\ll
b^2\!\!\int_{2b}^\pi t^{4\alpha-4}\,dt\ll b^{4\alpha-1}.
$$
By the last two displays, $\|f_z-f_s\|^2\ll b^{4\alpha-1}\ll
|z-s|^{4\alpha-1}$, which completes the proof.
\end{proof}

The following Theorem is due to Belov \cite[Corollary~3.1]{belov}.

\begin{thmb} Let $\alpha,\beta>0$, $\lambda>2$, $M\geq 0$,
$\{\lambda_n\}_{n\in\N}$ be a sequence of positive numbers,
$\{a_n\}_{n\in\N}$ be a sequence of complex numbers and $g:\R\to\C$
be such that
\begin{align*}
&\text{$|g(x)-g(y)|\leq M|x-y|$ \ for any \ $x,y\in \R$, \
$\textstyle \sum\limits_{n=1}^\infty |a_n|<\infty$, \
$\alpha(1+\beta)\leq 1$,}
\\
&\text{$\textstyle \frac{\lambda_{m+1}}{\lambda_m}\geq \lambda$, \
$\textstyle |a_{m}|\leq
\beta\!\!\!\!\sum\limits_{n=m+1}^\infty|a_n|$ \ and \ $\textstyle
2\pi\frac{\lambda-1}{\lambda-2}\Bigl(M+\sum\limits_{n=1}^m|a_n|\lambda_n\Bigr)
\leq \alpha|a_{m+1}|\lambda_{m+1}$ \ for each $m\in\N$}.
\end{align*}
Assume also that $x_0\in\R$, $\phi:\R\to\C$ is defined by the
formula
$$
\text{$\textstyle \phi(x)=g(x)+\sum\limits_{n=1}^\infty
a_ne^{i\lambda_n x}$ and $I=[x_0-\Delta,x_0+\Delta]$, \ where
$\textstyle \Delta=\frac{2\pi\lambda}{(\lambda-2)\lambda_1}$}.
$$
Then $\phi^{-1}(w)\cap I$ is uncountable for any $w\in\C$ satisfying
$$
\text{$\textstyle \frac\beta{1+\beta}\sum\limits_{n=1}^\infty
|a_n|\leq |g(x_0)-w|\leq (1-\alpha)\sum\limits_{n=1}^\infty |a_n|$.}
$$
\end{thmb}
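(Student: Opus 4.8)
The plan is to prove Theorem~B by the classical nested-interval (``freeze and sweep'') method for lacunary series, producing a Cantor-type set of solutions of $\phi(x)=w$ inside $I$. Throughout write $R_m=\sum_{n=m+1}^\infty|a_n|$ for the tail and $\phi_m(x)=g(x)+\sum_{n=1}^m a_ne^{i\lambda_n x}$ for the $m$-th partial sum, so that $\phi_m\to\phi$ uniformly and $R_m\to0$. The hypothesis $|a_m|\le\beta\sum_{n>m}|a_n|$ rewrites as $|a_{m+1}|\le\beta R_{m+1}$ and yields $R_{m+1}\ge R_m/(1+\beta)$, so the tails decay in a controlled geometric fashion; this is what keeps the target value reachable at every stage.

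First I would isolate the geometric core. Put $\kappa=\lambda/(\lambda-2)>1$ and look at a subinterval of length $4\pi\kappa/\lambda_{m+1}$: on it the phase $\lambda_{m+1}x$ ranges over an interval of length $4\pi\kappa>4\pi$, so the term $a_{m+1}e^{i\lambda_{m+1}x}$ traces the full circle of radius $|a_{m+1}|$ (indeed more than twice around). Meanwhile $\phi_m$ has Lipschitz constant $M+\sum_{n\le m}|a_n|\lambda_n$, and the inequality $2\pi\frac{\lambda-1}{\lambda-2}\bigl(M+\sum_{n\le m}|a_n|\lambda_n\bigr)\le\alpha|a_{m+1}|\lambda_{m+1}$ is precisely calibrated so that over such an interval $\phi_m$ varies by at most a controlled multiple of $\alpha|a_{m+1}|$. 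Thus $\phi_m$ is essentially frozen at some value $v_m$ while the new term sweeps a circle about $v_m$, and elementary plane geometry shows that as soon as $|a_{m+1}|-R_{m+1}\le|w-v_m|\le|a_{m+1}|+R_{m+1}=R_m$, some choice of phase brings $v_m+a_{m+1}e^{i\lambda_{m+1}x}$ within $R_{m+1}$ of $w$.

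The main step is then an induction producing nested closed intervals $I=J_0\supseteq J_1\supseteq\cdots$, with $|J_m|=4\pi\kappa/\lambda_{m+1}$, on which the two-sided invariant
\[
\tfrac{\beta}{1+\beta}\,R_m\ \le\ |w-\phi_m(x)|\ \le\ R_m
\]
holds for all $x\in J_m$. The base case $m=0$ is exactly the hypothesis $\frac{\beta}{1+\beta}\sum|a_n|\le|g(x_0)-w|\le(1-\alpha)\sum|a_n|$, together with the bound on $M\Delta$ coming from the $m=0$ instance of the third hypothesis, which controls the variation of $g$ across $I=J_0$. The inductive step applies the freeze-and-sweep observation to select a favorable phase window inside $J_m$ and restrict $\phi_{m+1}$ into the prescribed annulus about $w$: here the slack $1-\alpha$ absorbs the residual wiggle of $\phi_m$ (which is why $\alpha(1+\beta)\le1$ is imposed), while the floor $\frac{\beta}{1+\beta}R_m$ keeps $|w-\phi_m|$ from collapsing below what the later terms $a_{n},\ n>m$, can still supply. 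Letting $m\to\infty$, the lengths $|J_m|\to0$ (since $\lambda_m\to\infty$) force $\bigcap_m J_m$ to be a single point $x^\ast$, and the uniform convergence $\phi_m\to\phi$ with $R_m\to0$ gives $\phi(x^\ast)=w$.

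Finally, to upgrade ``one solution'' to ``uncountably many'', at each stage I would retain not one but at least two disjoint favorable windows: since $\lambda_{m+1}|J_m|=4\pi\kappa>4\pi$, the favorable phase arc is met more than once inside $J_m$, so the construction branches into a binary tree of nested intervals; distinct branches produce distinct limit points $x^\ast$, all solving $\phi(x)=w$. Hence $\phi^{-1}(w)\cap I$ contains a Cantor set and is in particular uncountable. I expect the real obstacle to be none of these structural steps but the bookkeeping that makes the induction actually close: verifying that the constants $\kappa=\lambda/(\lambda-2)$, the factor $\frac{\lambda-1}{\lambda-2}$, and the thresholds $\frac{\beta}{1+\beta}$ and $1-\alpha$ mesh so that, at \emph{every} stage, both the wiggle of $\phi_m$ and the possible overshoot of $|a_{m+1}|$ stay within the tolerance dictated by $R_{m+1}$ --- in particular that the two-sided invariant, and not merely its upper half, can be propagated.
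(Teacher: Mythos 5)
First, a point of reference: the paper does not prove Theorem~B at all --- it is quoted verbatim from Belov \cite{belov} (Corollary~3.1 there) and used as a black box. So there is no internal proof to compare against. Your outline (nested intervals, freezing $\phi_m$ while the new lacunary term sweeps a circle of radius $|a_{m+1}|$, branching into two windows per stage to get a Cantor set of solutions) is certainly the right species of argument and is presumably the skeleton of Belov's proof.

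However, what you have written is a plan rather than a proof, and the step you yourself flag as ``the real obstacle'' is exactly where it fails as formulated. Your inductive invariant is that $\frac{\beta}{1+\beta}R_m\le|w-\phi_m(x)|\le R_m$ holds \emph{uniformly} on $J_m$ with $|J_m|=4\pi\kappa/\lambda_{m+1}$. Test this against the only parameters the theorem is applied to in this paper: $\lambda=2^9$, $\alpha=1/8$, $\beta=7$, $M=0$. The hypothesis $2\pi\frac{\lambda-1}{\lambda-2}L_{m+1}\le\alpha|a_{m+2}|\lambda_{m+2}$ (with $L_{m+1}$ the Lipschitz constant of $\phi_{m+1}$) only bounds the oscillation of $\phi_{m+1}$ over $J_{m+1}$ by $\frac{2\lambda}{\lambda-1}\alpha|a_{m+2}|\approx\frac14|a_{m+2}|$, and $|a_{m+2}|$ may be as large as $\frac{\beta}{1+\beta}R_{m+1}=\frac78R_{m+1}$, so the oscillation can reach roughly $0.22\,R_{m+1}$; but your target annulus $[\frac{\beta}{1+\beta}R_{m+1},R_{m+1}]$ has width only $\frac18R_{m+1}$. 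Hence the uniform two-sided invariant cannot in general be carried from one stage to the next, and the mechanism you propose (Lipschitz control of $\phi_{m+1}$ over all of $J_{m+1}$) cannot establish it. The invariant has to be tracked at a distinguished point of each interval, with the oscillation over the interval entering the estimate only once, when the next term is added, where it is absorbed by the slack between $(1-\alpha)$ and $1$ --- and that is precisely the arithmetic you have deferred. A secondary gap: your base case invokes ``the $m=0$ instance of the third hypothesis'' to bound the variation of $g$ over $I$, but the hypotheses are stated only for $m\in\N=\{1,2,\dots\}$, so no such instance is available to you; as written you have no control of $M\Delta$ against $\alpha\sum|a_n|$. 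Until the invariant is reformulated and the constants are checked to actually close the induction, this remains an outline.
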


\begin{remark}\label{rem} The main point in \cite{belov} is to find
$\phi:\R\to\C$ defined by an absolutely convergent lacunary
trigonometric series with the continuity modulus as small as
possible and with $\phi(\R)$ having non-empty interior in $\C$. The
latter means that $\phi$ defines a Peano curve. Belov's construction
allows not only to ensure that certain complex numbers belong to
$\phi(\R)$ but also that they are attained by $\phi$ uncountably
many times. We take an advantage of the latter property.
\end{remark}

\subsection{Proof of Lemma~\ref{main0}}

Consider the functions
\begin{equation}\label{gamma}
\gamma,\psi:\T\to \C,\quad\smash{\gamma(z)=\sum_{n=1}^\infty
8^{1-n}z^{2^{9n}}\ \ \text{and}\ \
\psi(z)=\gamma(z)+\gamma(z^{-1})=\sum_{n=1}^\infty
8^{1-n}(z^{2^{9n}}+z^{-2^{9n}}).} \PLA
\end{equation}
Since $\log_{2^9}8=1/3$, Lemma~\ref{funct} implies that $\gamma\in
H_{1/3}(\T)$. Hence $\psi\in H_{1/3}(\T)$. If $\phi:\R\to\C$ is
defined by the formula
$$
\phi(x)=\gamma(e^{2\pi ix})=\sum_{n=1}^\infty 8^{1-n}e^{2\pi 2^{9n}
ix},
$$
then $\phi$ is $2\pi$-periodic and has the shape exactly as in
Theorem~B with $g=0$, $a_n=8^{1-n}$ and $\lambda_n=2\pi 2^{9n}$. Now
we put $M=0$, $\lambda=2^9$, $\beta=7$ and $\alpha=1/8$. It is
straightforward to verify that all conditions of Theorem~B are
satisfied. Since $\frac\beta{\beta+1}\sum\limits_{n=1}^\infty
|a_n|=(1-\alpha)\sum\limits_{n=1}^\infty |a_n|=1$, Theorem~B implies
that $\phi^{-1}(w)$ is uncountable if $|w|=1$. Hence
$\gamma^{-1}(w)$ is uncountable for each $w\in\T$. In particular,
\begin{equation}\label{unc}
\text{the closed set\quad $F=\{z\in\T:\gamma(z)=i\}$\quad is
uncountable.}
\end{equation}
By (\ref{unc}) and Lemma~\ref{comp11}, there is a perfect compact
set $K\subset\T$ such that $K\subseteq F$ and
$\dist(\cdot,K)^{-\alpha}\in L_2(\T)$ for each $\alpha<\frac12$. In
particular, $g\in L_2(\T)$, where
\begin{equation}\label{g}
g(z)=-iz^{-1}\dist(z,K)^{-1/3}.
\end{equation}
Obviously, $h\in L_2(\T)$, where
\begin{equation}\label{h}
h(z)=\psi(z)\,\dist(z,K)^{1/3}
\end{equation}
and $\psi$ is defined in (\ref{gamma}). In order to prove
Lemma~\ref{funct}, it suffices to verify that conditions (\ref{m01})
and (\ref{m02}) are satisfied for the just specified $K$, $h$ and
$g$.

First, observe that $\gamma(z)=i$ and therefore
$\gamma(z^{-1})=\gamma(\overline{z})=\overline{\gamma(z)}=-i$ for
$z\in F$. Thus using (\ref{gamma}) and the inclusion $K\subseteq F$,
we get
\begin{equation}\label{ii}
\gamma(z)=i,\ \ \gamma(z^{-1})=-i\ \ \text{and}\ \ \psi(z)=0\ \ \
\text{for each $z\in K$.}
\end{equation}
Since $\psi\in H_{1/3}(\T)$, (\ref{h}), (\ref{ii}) and
Lemma~\ref{funct1} imply that $h_z\in L_2(\T)$ for each $z\in K$,
where $h_z(w)=\frac{h(w)}{z-w}$ and the map $z\mapsto h_z$ from $K$
to $L_2(\T)$ is continuous. Thus (\ref{m01}) is satisfied. It
remains to verify (\ref{m02}). First, from (\ref{h}) and (\ref{g})
it follows that for each $z\in K$,
$$
\langle h_z,g\rangle=\!\!\int_\T h_z(w)\overline{g(w)}\,\mu(dw)=
i\!\!\int_\T\frac{\psi(w)\,\dist(w,K)^{1/3}\!\!}{z-w}w\,\dist(w,K)^{-1/3}\,\mu(dw)=
i\!\!\int_\T\frac{w\psi(w)}{z-w}\,\mu(dw).
$$
Applying Lemma~\ref{conj1}, we see that $\langle
h_z,g\rangle=i\psi_-(z)$. According to (\ref{gamma}),
$\psi_-(z)=\gamma(z^{-1})$. By (\ref{ii}), $\gamma(z^{-1})=-i$ and
therefore $\langle h_z,g\rangle=i\psi_-(z)=i(-i)=1$. Thus
\begin{equation}\label{p1}
\langle h_z,g\rangle=1\ \ \ \text{for each $z\in K$.}
\end{equation}

Next, let $g_1\in L_2(\T)$ be defined by the formula
\begin{equation}\label{g1}
g_1(z)=zg(z)=-i\,\dist(z,K)^{-1/3}.
\end{equation}
Then using (\ref{h}) and (\ref{g1}), we obtain
$$
\langle h_z,g_1\rangle=\!\!\int_\T h_z(w)\overline{g_1(w)}\,\mu(dw)=
i\!\!\int_\T\frac{\psi(w)\,\dist(w,K)^{1/3}\!\!}{z-w}\dist(w,K)^{-1/3}\,\mu(dw)=
i\!\!\int_\T\frac{\psi(w)}{z-w}\,\mu(dw).
$$
Applying Lemma~\ref{conj1}, we see that $\langle
h_z,g_1\rangle=i(\psi_0)_-(z)$, where $\psi_0(z)=z^{-1}\psi(z)$.
Using (\ref{gamma}), we have $(\psi_0)_-(z)=z^{-1}\gamma(z^{-1})$.
By (\ref{ii}), $\gamma(z^{-1})=-i$ and therefore $\langle
h_z,g_1\rangle=iz^{-1}\psi_-(z)=i(-i)z^{-1}=z^{-1}$. Thus
\begin{equation}\label{p2}
\langle h_z,g_1\rangle=z^{-1}\ \ \ \text{for each $z\in K$.}
\end{equation}

Finally, from (\ref{h}) and (\ref{g1}) it follows that
$$
\langle h,g_1\rangle=\!\!\int_\T h(w)\overline{g_1(w)}\,\mu(dw)=
i\!\!\int_\T\psi(w)\,\mu(dw)=i\langle \psi,{\bf 1}\rangle,
$$
where ${\bf 1}$ is the constant $1$ function. On the other hand,
looking at the shape (\ref{gamma}) of the Fourier series of $\psi$,
we immediately see that $\langle \psi,{\bf 1}\rangle=0$. Hence
$\langle h,g_1\rangle=0$, which together with (\ref{p1}) and
(\ref{p2}) implies (\ref{m02}). The proof of Lemma~\ref{main0} is
complete and so is the proof of Theorem~\ref{main}.

\section{Concluding remarks \label{s5}}

If we replace finite rank perturbations by compact perturbations,
Question~S becomes relatively easy. Namely, an operator of the shape
$I+K$ can be hypercyclic \cite{sal2}, where $K$ is a compact
operator on a separable infinite dimensional Hilbert space.
Moreover, $K$ may be chosen to be nuclear. On the other hand, an
operator of the shape $I+K$ with $K$ being of finite rank, can not
be cyclic.

Theorem~\ref{main} naturally gives rise to the following question.

\begin{question}\label{q1} Does there exist a hypercyclic rank $1$
perturbation of a unitary operator?
\end{question}

It is worth noting that the above proof of Theorem~\ref{main}
provides a hypercyclic rank 1 perturbation of a unitary operator if
we can construct $K$, $h$ and $g$ as in Lemma~\ref{main0} with the
additional property that $h\in\spannn\{h_\lambda:\lambda\in K\}$.
This additional requirement seems to be difficult to achieve.

Recall that a bounded linear operator $T$ on a Banach space $X$ is
called {\it mixing} if for any two non-empty open sets $U,V\subseteq
X$, $T^n(U)\cap V\neq \varnothing$ for all sufficiently large
$n\in\N$. Equivalently $T$ is mixing if and only if for any infinite
set $A\subset \N$, there exists $x=x(A)\in X$ such that $\{T^nx:x\in
A\}$ is dense in $X$. Thus mixing condition is a strong form of
hypercyclicity. The following question seems to be natural and
interesting.

\begin{question}\label{q2} Does there exist a mixing finite rank
perturbation of a hyponormal operator?
\end{question}

\bigskip {\bf Acknowledgements.} \ The author would like to thank the
referee for helpful comments. 

\small\rm

\vskip1truecm

\scshape

\noindent  Stanislav Shkarin

\noindent Queens's University Belfast

\noindent Department of Pure Mathematics

\noindent University road, Belfast, BT7 1NN, UK

\noindent E-mail address: \ \ \ {\tt s.shkarin@qub.ac.uk}

\end{document}